\newtheorem{theorem}{Theorem}
\newtheorem{axiom}[theorem]{Axiom}
\newtheorem{conjecture}[theorem]{Conjecture}
\newtheorem{corollary}[theorem]{Corollary}
\newtheorem{definition}[theorem]{Definition}
\newtheorem{example}[theorem]{Example}
\newtheorem{exercise}[theorem]{Exercise}
\newtheorem{lemma}[theorem]{Lemma}
\newtheorem{proposition}[theorem]{Proposition}
\newtheorem{remark}[theorem]{Remark}
\newenvironment{proof}[1][Proof]{\noindent\textbf{#1.} }{\ \rule{0.5em}{0.5em}}
\chardef\@x10\chardef\@xv60
\def\tcitime{
\def\@time{%
  \@minute\time\@hour\@minute\divide\@hour\@xv
  \ifnum\@hour<\@x 0\fi\the\@hour:%
  \multiply\@hour\@xv\advance\@minute-\@hour
  \ifnum\@minute<\@x 0\fi\the\@minute
  }}%
\def\QCTOpt[#1]#2{%
  \def\QCTOptB{#1}
  \def\QCTOptA{#2}
}
\def\QCTNOpt#1{%
  \def\QCTOptA{#1}
  \let\QCTOptB\empty
}
\def\Qct{%
  \@ifnextchar[{%
    \QCTOpt}{\QCTNOpt}
}
\def\QCBOpt[#1]#2{%
  \def\QCBOptB{#1}
  \def\QCBOptA{#2}
}
\def\QCBNOpt#1{%
  \def\QCBOptA{#1}
  \let\QCBOptB\empty
}
\def\Qcb{%
  \@ifnextchar[{%
    \QCBOpt}{\QCBNOpt}
}
\def\PrepCapArgs{%
  \ifx\QCBOptA\empty
    \ifx\QCTOptA\empty
      {}%
    \else
      \ifx\QCTOptB\empty
        {\QCTOptA}%
      \else
        [\QCTOptB]{\QCTOptA}%
      \fi
    \fi
  \else
    \ifx\QCBOptA\empty
      {}%
    \else
      \ifx\QCBOptB\empty
        {\QCBOptA}%
      \else
        [\QCBOptB]{\QCBOptA}%
      \fi
    \fi
  \fi
}
\def\GRAPHICSPS#1{%
 \ifcase\GRAPHICSTYPE
   \special{ps: #1}%
 \or
   \special{language "PS", include "#1"}%
 \fi
}%
\def\graffile#1#2#3#4{%
    \leavevmode
    \raise -#4 \BOXTHEFRAME{%
        \hbox to #2{\raise #3\hbox to #2{\null #1\hfil}}}%
}%
\def\draftbox#1#2#3#4{%
 \leavevmode\raise -#4 \hbox{%
  \frame{\rlap{\protect\tiny #1}\hbox to #2%
   {\vrule height#3 width\z@ depth\z@\hfil}%
  }%
 }%
}%
\newif\ifwasdraft
\def\GRAPHIC#1#2#3#4#5{%
 \ifnum\draft=\@ne\draftbox{#2}{#3}{#4}{#5}%
  \else\graffile{#1}{#3}{#4}{#5}%
  \fi
 }%
\def\addtoLaTeXparams#1{%
    \edef\LaTeXparams{\LaTeXparams #1}}%
\newif\ifBoxFrame \BoxFramefalse
\newif\ifOverFrame \OverFramefalse
\newif\ifUnderFrame \UnderFramefalse
\def\BOXTHEFRAME#1{%
   \hbox{%
      \ifBoxFrame
         \frame{#1}%
      \else
         {#1}%
      \fi
   }%
}
\def\doFRAMEparams#1{\BoxFramefalse\OverFramefalse\UnderFramefalse\readFRAMEparams#1\end}%
\def\readFRAMEparams#1{%
   \ifx#1\end%
  \let\next=\relax
  \else
  \ifx#1i\dispkind=\z@\fi
  \ifx#1d\dispkind=\@ne\fi
  \ifx#1f\dispkind=\tw@\fi
	\ifx#1h
    \ifnum\dispkind=\tw@
			\@ifundefined{@HHfloat}{
			  \addtoLaTeXparams{h}
		 	 }{
         \def\LaTeXparams{H}
         \typeout{tcilatex: attribute align pos of FRAME  set to H}
         \typeout{\space \space \space \space all other placement options (tbp) are ignored }
   		 }
	  \else
			\addtoLaTeXparams{h}
    \fi
	\fi
  \if\LaTeXparams H
  	 \ifx#1t\fi	 
  	 \ifx#1b\fi	 
     \ifx#1p\fi
  \else
      \ifx#1t\addtoLaTeXparams{t}\fi
      \ifx#1b\addtoLaTeXparams{b}\fi
      \ifx#1p\addtoLaTeXparams{p}\fi
  \fi

  \ifx#1X\BoxFrametrue\fi
  \ifx#1O\OverFrametrue\fi
  \ifx#1U\UnderFrametrue\fi
  \ifx#1w
    \ifnum\draft=1\wasdrafttrue\else\wasdraftfalse\fi
    \draft=\@ne
  \fi
  \let\next=\readFRAMEparams
  \fi
 \next
 }%
\def\IFRAME#1#2#3#4#5#6{%
      \bgroup
      \let\QCTOptA\empty
      \let\QCTOptB\empty
      \let\QCBOptA\empty
      \let\QCBOptB\empty
      #6%
      \parindent=0pt%
      \leftskip=0pt
      \rightskip=0pt
      \setbox0 = \hbox{\QCBOptA}%
      \@tempdima = #1\relax
      \ifOverFrame
          \typeout{This is not implemented yet}%
          \show\HELP
      \else
         \ifdim\wd0>\@tempdima
            \advance\@tempdima by \@tempdima
            \ifdim\wd0 >\@tempdima
               \textwidth=\@tempdima
               \setbox1 =\vbox{%
                  \noindent\hbox to \@tempdima{\hfill\GRAPHIC{#5}{#4}{#1}{#2}{#3}\hfill}\\%
                  \noindent\hbox to \@tempdima{\parbox[b]{\@tempdima}{\QCBOptA}}%
               }%
               \wd1=\@tempdima
            \else
               \textwidth=\wd0
               \setbox1 =\vbox{%
                 \noindent\hbox to \wd0{\hfill\GRAPHIC{#5}{#4}{#1}{#2}{#3}\hfill}\\%
                 \noindent\hbox{\QCBOptA}%
               }%
               \wd1=\wd0
            \fi
         \else
            \ifdim\wd0>0pt
              \hsize=\@tempdima
              \setbox1 =\vbox{%
                \unskip\GRAPHIC{#5}{#4}{#1}{#2}{0pt}%
                \break
                \unskip\hbox to \@tempdima{\hfill \QCBOptA\hfill}%
              }%
              \wd1=\@tempdima
           \else
              \hsize=\@tempdima
              \setbox1 =\vbox{%
                \unskip\GRAPHIC{#5}{#4}{#1}{#2}{0pt}%
              }%
              \wd1=\@tempdima
           \fi
         \fi
         \@tempdimb=\ht1
         \advance\@tempdimb by \dp1
         \advance\@tempdimb by -#2%
         \advance\@tempdimb by #3%
         \leavevmode
         \raise -\@tempdimb \hbox{\box1}%
      \fi
      \egroup%
}%
\def\DFRAME#1#2#3#4#5{%
 \begin{center}
     \let\QCTOptA\empty
     \let\QCTOptB\empty
     \let\QCBOptA\empty
     \let\QCBOptB\empty
     \ifOverFrame 
        #5\QCTOptA\par
     \fi
     \GRAPHIC{#4}{#3}{#1}{#2}{\z@}
     \ifUnderFrame 
        \nobreak\par #5\QCBOptA
     \fi
 \end{center}%
 }%
\def\FFRAME#1#2#3#4#5#6#7{%
 \begin{figure}[#1]%
  \let\QCTOptA\empty
  \let\QCTOptB\empty
  \let\QCBOptA\empty
  \let\QCBOptB\empty
  \ifOverFrame
    #4
    \ifx\QCTOptA\empty
    \else
      \ifx\QCTOptB\empty
        \caption{\QCTOptA}%
      \else
        \caption[\QCTOptB]{\QCTOptA}%
      \fi
    \fi
    \ifUnderFrame\else
      \label{#5}%
    \fi
  \else
    \UnderFrametrue%
  \fi
  \begin{center}\GRAPHIC{#7}{#6}{#2}{#3}{\z@}\end{center}%
  \ifUnderFrame
    #4
    \ifx\QCBOptA\empty
      \caption{}%
    \else
      \ifx\QCBOptB\empty
        \caption{\QCBOptA}%
      \else
        \caption[\QCBOptB]{\QCBOptA}%
      \fi
    \fi
    \label{#5}%
  \fi
  \end{figure}%
 }%
\def\makeactives{
  \catcode`\"=\active
  \catcode`\;=\active
  \catcode`\:=\active
  \catcode`\'=\active
  \catcode`\~=\active
}
   \gdef\activesoff{%
      \def"{\string"}
      \def;{\string;}
      \def:{\string:}
      \def'{\string'}
      \def~{\string~}
    }
\def\FRAME#1#2#3#4#5#6#7#8{%
 \bgroup
 \@ifundefined{bbl@deactivate}{}{\activesoff}
 \ifnum\draft=\@ne
   \wasdrafttrue
 \else
   \wasdraftfalse%
 \fi
 \def\LaTeXparams{}%
 \dispkind=\z@
 \def\LaTeXparams{}%
 \doFRAMEparams{#1}%
 \ifnum\dispkind=\z@\IFRAME{#2}{#3}{#4}{#7}{#8}{#5}\else
  \ifnum\dispkind=\@ne\DFRAME{#2}{#3}{#7}{#8}{#5}\else
   \ifnum\dispkind=\tw@
    \edef\@tempa{\noexpand\FFRAME{\LaTeXparams}}%
    \@tempa{#2}{#3}{#5}{#6}{#7}{#8}%
    \fi
   \fi
  \fi
  \ifwasdraft\draft=1\else\draft=0\fi{}%
  \egroup
 }%
\def\TEXUX#1{"texux"}
\long\def\QQQ#1#2{%
     \long\expandafter\def\csname#1\endcsname{#2}}%
\long\def\QQA#1#2{}%
\def\QTR#1#2{{\csname#1\endcsname #2}}
\def\EXPAND#1[#2]#3{}%
\def\NOEXPAND#1[#2]#3{}%
\def\LaTeXparent#1{}%
\def\ChildStyles#1{}%
\def\ChildDefaults#1{}%
\def\QTagDef#1#2#3{}%
\def\QQfnmark#1{\footnotemark}
\def\makeatletter\input gnuindex.sty\makeatother\makeindex{\makeatletter\input gnuindex.sty\makeatother\makeindex}%
\def\initial#1{\bigbreak{\raggedright\large\bf #1}\kern 2\p@\penalty3000}}%
 \def\abstract{%
  \if@twocolumn
   \section*{Abstract (Not appropriate in this style!)}%
   \else \small 
   \begin{center}{\bf Abstract\vspace{-.5em}\vspace{\z@}}\end{center}%
   \quotation 
   \fi
  }%
   \def\registered{\relax\ifmmode{}\r@gistered
                    \else$\m@th\r@gistered$\fi}%
 \def\r@gistered{^{\ooalign
  {\hfil\raise.07ex\hbox{$\scriptstyle\rm\text{R}$}\hfil\crcr
  \mathhexbox20D}}}}{}%
\newdimen\theight
\def\Column{%
 \vadjust{\setbox\z@=\hbox{\scriptsize\quad\quad tcol}%
  \theight=\ht\z@\advance\theight by \dp\z@\advance\theight by \lineskip
  \kern -\theight \vbox to \theight{%
   \rightline{\rlap{\box\z@}}%
   \vss
   }%
  }%
 }%
\def\qed{%
 \ifhmode\unskip\nobreak\fi\ifmmode\ifinner\else\hskip5\p@\fi\fi
 \hbox{\hskip5\p@\vrule width4\p@ height6\p@ depth1.5\p@\hskip\p@}%
 }%
\def\miss{\hbox{\vrule height2\p@ width 2\p@ depth\z@}}%
\def\tcol#1{{\baselineskip=6\p@ \vcenter{#1}} \Column}  %
\def\newfmtname{LaTeX2e}
\def\chkcompat{%
   \if@compatibility
   \else
     \usepackage{latexsym}
   \fi
}
  \DeclareOldFontCommand{\rm}{\normalfont\rmfamily}{\mathrm}
  \DeclareOldFontCommand{\sf}{\normalfont\sffamily}{\mathsf}
  \DeclareOldFontCommand{\tt}{\normalfont\ttfamily}{\mathtt}
  \DeclareOldFontCommand{\bf}{\normalfont\bfseries}{\mathbf}
  \DeclareOldFontCommand{\it}{\normalfont\itshape}{\mathit}
  \DeclareOldFontCommand{\sl}{\normalfont\slshape}{\@nomath\sl}
  \DeclareOldFontCommand{\sc}{\normalfont\scshape}{\@nomath\sc}
\def\alpha{{\Greekmath 010B}}%
\def\beta{{\Greekmath 010C}}%
\def\gamma{{\Greekmath 010D}}%
\def\delta{{\Greekmath 010E}}%
\def\epsilon{{\Greekmath 010F}}%
\def\zeta{{\Greekmath 0110}}%
\def\eta{{\Greekmath 0111}}%
\def\theta{{\Greekmath 0112}}%
\def\iota{{\Greekmath 0113}}%
\def\kappa{{\Greekmath 0114}}%
\def\lambda{{\Greekmath 0115}}%
\def\mu{{\Greekmath 0116}}%
\def\nu{{\Greekmath 0117}}%
\def\xi{{\Greekmath 0118}}%
\def\pi{{\Greekmath 0119}}%
\def\rho{{\Greekmath 011A}}%
\def\sigma{{\Greekmath 011B}}%
\def\tau{{\Greekmath 011C}}%
\def\upsilon{{\Greekmath 011D}}%
\def\phi{{\Greekmath 011E}}%
\def\chi{{\Greekmath 011F}}%
\def\psi{{\Greekmath 0120}}%
\def\omega{{\Greekmath 0121}}%
\def\varepsilon{{\Greekmath 0122}}%
\def\vartheta{{\Greekmath 0123}}%
\def\varpi{{\Greekmath 0124}}%
\def\varrho{{\Greekmath 0125}}%
\def\varsigma{{\Greekmath 0126}}%
\def\varphi{{\Greekmath 0127}}%
\def\nabla{{\Greekmath 0272}}
\def\FindBoldGroup{%
   {\setbox0=\hbox{$\mathbf{x\global\edef\theboldgroup{\the\mathgroup}}$}}%
}
\def\Greekmath#1#2#3#4{%
    \if@compatibility
        \ifnum\mathgroup=\symbold
           \mathchoice{\mbox{\boldmath$\displaystyle\mathchar"#1#2#3#4$}}%
                      {\mbox{\boldmath$\textstyle\mathchar"#1#2#3#4$}}%
                      {\mbox{\boldmath$\scriptstyle\mathchar"#1#2#3#4$}}%
                      {\mbox{\boldmath$\scriptscriptstyle\mathchar"#1#2#3#4$}}%
        \else
           \mathchar"#1#2#3#4%
        \fi 
    \else 
        \FindBoldGroup
        \ifnum\mathgroup=\theboldgroup 
           \mathchoice{\mbox{\boldmath$\displaystyle\mathchar"#1#2#3#4$}}%
                      {\mbox{\boldmath$\textstyle\mathchar"#1#2#3#4$}}%
                      {\mbox{\boldmath$\scriptstyle\mathchar"#1#2#3#4$}}%
                      {\mbox{\boldmath$\scriptscriptstyle\mathchar"#1#2#3#4$}}%
        \else
           \mathchar"#1#2#3#4%
        \fi     	    
	  \fi}
\newif\ifGreekBold  \GreekBoldfalse
\let\SAVEPBF=\pbf
\def\pbf{\GreekBoldtrue\SAVEPBF}%
  \newcounter{equationnumber}  
  \def\mathletters{%
     \addtocounter{equation}{1}
     \edef\@currentlabel{\theequation}%
     \setcounter{equationnumber}{\c@equation}
     \setcounter{equation}{0}%
     \edef\theequation{\@currentlabel\noexpand\alph{equation}}%
  }
    \def\BibTeX{{\rm B\kern-.05em{\sc i\kern-.025em b}\kern-.08em
                 T\kern-.1667em\lower.7ex\hbox{E}\kern-.125emX}}}{}%
\def\AmS{{\protect\usefont{OMS}{cmsy}{m}{n}%
                A\kern-.1667em\lower.5ex\hbox{M}\kern-.125emS}}}{}%
\def\DN@{\def\next@}%
\def\eat@#1{}%
\let\DOTSI\relax
\def\RIfM@{\relax\ifmmode}%
\def\FN@{\futurelet\next}%
\def\iint{\DOTSI\intno@\tw@\FN@\ints@}%
\def\iiint{\DOTSI\intno@\thr@@\FN@\ints@}%
\def\iiiint{\DOTSI\intno@4 \FN@\ints@}%
\def\idotsint{\DOTSI\intno@\z@\FN@\ints@}%
\def\ints@{\findlimits@\ints@@}%
\newif\iflimtoken@
\newif\iflimits@
\def\findlimits@{\limtoken@true\ifx\next\limits\limits@true
 \else\ifx\next\nolimits\limits@false\else
 \limtoken@false\ifx\ilimits@\nolimits\limits@false\else
 \ifinner\limits@false\else\limits@true\fi\fi\fi\fi}%
\def\multint@{\int\ifnum\intno@=\z@\intdots@                          
 \else\intkern@\fi                                                    
 \ifnum\intno@>\tw@\int\intkern@\fi                                   
 \ifnum\intno@>\thr@@\int\intkern@\fi                                 
 \int}
\def\multintlimits@{\intop\ifnum\intno@=\z@\intdots@\else\intkern@\fi
 \ifnum\intno@>\tw@\intop\intkern@\fi
 \ifnum\intno@>\thr@@\intop\intkern@\fi\intop}%
\def\intic@{%
    \mathchoice{\hskip.5em}{\hskip.4em}{\hskip.4em}{\hskip.4em}}%
\def\negintic@{\mathchoice
 {\hskip-.5em}{\hskip-.4em}{\hskip-.4em}{\hskip-.4em}}%
\def\ints@@{\iflimtoken@                                              
 \def\ints@@@{\iflimits@\negintic@
   \mathop{\intic@\multintlimits@}\limits                             
  \else\multint@\nolimits\fi                                          
  \eat@}
 \else                                                                
 \def\ints@@@{\iflimits@\negintic@
  \mathop{\intic@\multintlimits@}\limits\else
  \multint@\nolimits\fi}\fi\ints@@@}%
\def\intkern@{\mathchoice{\!\!\!}{\!\!}{\!\!}{\!\!}}%
\def\plaincdots@{\mathinner{\cdotp\cdotp\cdotp}}%
\def\intdots@{\mathchoice{\plaincdots@}%
 {{\cdotp}\mkern1.5mu{\cdotp}\mkern1.5mu{\cdotp}}%
 {{\cdotp}\mkern1mu{\cdotp}\mkern1mu{\cdotp}}%
 {{\cdotp}\mkern1mu{\cdotp}\mkern1mu{\cdotp}}}%
\def\RIfM@{\relax\protect\ifmmode}
\def\text{\RIfM@\expandafter\text@\else\expandafter\mbox\fi}
\let\nfss@text\text
\def\text@#1{\mathchoice
   {\textdef@\displaystyle\f@size{#1}}%
   {\textdef@\textstyle\tf@size{\firstchoice@false #1}}%
   {\textdef@\textstyle\sf@size{\firstchoice@false #1}}%
   {\textdef@\textstyle \ssf@size{\firstchoice@false #1}}%
   \glb@settings}
\def\textdef@#1#2#3{\hbox{{%
                    \everymath{#1}%
                    \let\f@size#2\selectfont
                    #3}}}
\newif\iffirstchoice@
\def\Let@{\relax\iffalse{\fi\let\\=\cr\iffalse}\fi}%
\def\vspace@{\def\vspace##1{\crcr\noalign{\vskip##1\relax}}}%
\def\multilimits@{\bgroup\vspace@\Let@
 \baselineskip\fontdimen10 \scriptfont\tw@
 \advance\baselineskip\fontdimen12 \scriptfont\tw@
 \lineskip\thr@@\fontdimen8 \scriptfont\thr@@
 \lineskiplimit\lineskip
 \vbox\bgroup\ialign\bgroup\hfil$\m@th\scriptstyle{##}$\hfil\crcr}%
\def\Sb{_\multilimits@}%
\def\endSb{\crcr\egroup\egroup\egroup}%
\def\Sp{^\multilimits@}%
\newdimen\ex@
\def\rightarrowfill@#1{$#1\m@th\mathord-\mkern-6mu\cleaders
 \hbox{$#1\mkern-2mu\mathord-\mkern-2mu$}\hfill
 \mkern-6mu\mathord\rightarrow$}%
\def\leftarrowfill@#1{$#1\m@th\mathord\leftarrow\mkern-6mu\cleaders
 \hbox{$#1\mkern-2mu\mathord-\mkern-2mu$}\hfill\mkern-6mu\mathord-$}%
\def\leftrightarrowfill@#1{$#1\m@th\mathord\leftarrow
\mkern-6mu\cleaders
 \hbox{$#1\mkern-2mu\mathord-\mkern-2mu$}\hfill
 \mkern-6mu\mathord\rightarrow$}%
\def\overrightarrow{\mathpalette\overrightarrow@}%
\def\overrightarrow@#1#2{\vbox{\ialign{##\crcr\rightarrowfill@#1\crcr
 \noalign{\kern-\ex@\nointerlineskip}$\m@th\hfil#1#2\hfil$\crcr}}}%
\def\overleftarrow{\mathpalette\overleftarrow@}%
\def\overleftarrow@#1#2{\vbox{\ialign{##\crcr\leftarrowfill@#1\crcr
 \noalign{\kern-\ex@\nointerlineskip}$\m@th\hfil#1#2\hfil$\crcr}}}%
\def\overleftrightarrow{\mathpalette\overleftrightarrow@}%
\def\overleftrightarrow@#1#2{\vbox{\ialign{##\crcr
   \leftrightarrowfill@#1\crcr
 \noalign{\kern-\ex@\nointerlineskip}$\m@th\hfil#1#2\hfil$\crcr}}}%
\def\underrightarrow{\mathpalette\underrightarrow@}%
\def\underrightarrow@#1#2{\vtop{\ialign{##\crcr$\m@th\hfil#1#2\hfil
  $\crcr\noalign{\nointerlineskip}\rightarrowfill@#1\crcr}}}%
\def\underleftarrow{\mathpalette\underleftarrow@}%
\def\underleftarrow@#1#2{\vtop{\ialign{##\crcr$\m@th\hfil#1#2\hfil
  $\crcr\noalign{\nointerlineskip}\leftarrowfill@#1\crcr}}}%
\def\underleftrightarrow{\mathpalette\underleftrightarrow@}%
\def\underleftrightarrow@#1#2{\vtop{\ialign{##\crcr$\m@th
  \hfil#1#2\hfil$\crcr
 \noalign{\nointerlineskip}\leftrightarrowfill@#1\crcr}}}%
\def\qopnamewl@#1{\mathop{\operator@font#1}\nlimits@}
\let\nlimits@\displaylimits
\def\setboxz@h{\setbox\z@\hbox}
\def\varlim@#1#2{\mathop{\vtop{\ialign{##\crcr
 \hfil$#1\m@th\operator@font lim$\hfil\crcr
 \noalign{\nointerlineskip}#2#1\crcr
 \noalign{\nointerlineskip\kern-\ex@}\crcr}}}}
 \def\rightarrowfill@#1{\m@th\setboxz@h{$#1-$}\ht\z@\z@
  $#1\copy\z@\mkern-6mu\cleaders
  \hbox{$#1\mkern-2mu\box\z@\mkern-2mu$}\hfill
  \mkern-6mu\mathord\rightarrow$}
\def\leftarrowfill@#1{\m@th\setboxz@h{$#1-$}\ht\z@\z@
  $#1\mathord\leftarrow\mkern-6mu\cleaders
  \hbox{$#1\mkern-2mu\copy\z@\mkern-2mu$}\hfill
  \mkern-6mu\box\z@$}
\def\projlim{\qopnamewl@{proj\,lim}}
\def\injlim{\qopnamewl@{inj\,lim}}
\def\varinjlim{\mathpalette\varlim@\rightarrowfill@}
\def\varprojlim{\mathpalette\varlim@\leftarrowfill@}
\def\varliminf{\mathpalette\varliminf@{}}
\def\varliminf@#1{\mathop{\underline{\vrule\@depth.2\ex@\@width\z@
   \hbox{$#1\m@th\operator@font lim$}}}}
\def\varlimsup{\mathpalette\varlimsup@{}}
\def\varlimsup@#1{\mathop{\overline
  {\hbox{$#1\m@th\operator@font lim$}}}}
\def\align{\@verbatim \frenchspacing\@vobeyspaces \@alignverbatim
You are using the "align" environment in a style in which it is not defined.}
\let\csname endalign*\endcsname =\endtrivlist
\def\alignat{\@verbatim \frenchspacing\@vobeyspaces \@alignatverbatim
You are using the "alignat" environment in a style in which it is not defined.}
\let\csname endalignat*\endcsname =\endtrivlist
\def\xalignat{\@verbatim \frenchspacing\@vobeyspaces \@xalignatverbatim
You are using the "xalignat" environment in a style in which it is not defined.}
\let\csname endxalignat*\endcsname =\endtrivlist
\def\gather{\@verbatim \frenchspacing\@vobeyspaces \@gatherverbatim
You are using the "gather" environment in a style in which it is not defined.}
\let\csname endgather*\endcsname =\endtrivlist
\def\multiline{\@verbatim \frenchspacing\@vobeyspaces \@multilineverbatim
You are using the "multiline" environment in a style in which it is not defined.}
\let\csname endmultiline*\endcsname =\endtrivlist
\def\arrax{\@verbatim \frenchspacing\@vobeyspaces \@arraxverbatim
You are using a type of "array" construct that is only allowed in AmS-LaTeX.}
\def\tabulax{\@verbatim \frenchspacing\@vobeyspaces \@tabulaxverbatim
You are using a type of "tabular" construct that is only allowed in AmS-LaTeX.}
\let\csname endarrax*\endcsname =\endtrivlist
\let\csname endtabulax*\endcsname =\endtrivlist
\def\@@eqncr{\let\@tempa\relax
    \ifcase\@eqcnt \def\@tempa{& & &}\or \def\@tempa{& &}%
      \else \def\@tempa{&}\fi
     \@tempa
     \if@eqnsw
        \iftag@
           \@taggnum
        \else
           \@eqnnum\stepcounter{equation}%
        \fi
     \fi
     \global\tag@false
     \global\@eqnswtrue
     \global\@eqcnt\z@\cr}
 \def\endequation{%
     \ifmmode\ifinner 
      \iftag@
        \addtocounter{equation}{-1} 
        $\hfil
           \displaywidth\linewidth\@taggnum\egroup \endtrivlist
        \global\tag@false
        \global\@ignoretrue   
      \else
        $\hfil
           \displaywidth\linewidth\@eqnnum\egroup \endtrivlist
        \global\tag@false
        \global\@ignoretrue 
      \fi
     \else   
      \iftag@
        \addtocounter{equation}{-1} 
        \eqno \hbox{\@taggnum}
        \global\tag@false%
        $$\global\@ignoretrue
      \else
        \eqno \hbox{\@eqnnum}
        $$\global\@ignoretrue
      \fi
     \fi\fi
 } 
 \newif\iftag@ \tag@false
 \def\tag{\@ifnextchar*{\@tagstar}{\@tag}}
 \def\@tag#1{%
     \global\tag@true
     \global\def\@taggnum{(#1)}}
 \def\@tagstar*#1{%
     \global\tag@true
     \global\def\@taggnum{#1}%
}
\begin{document}

\title{Fractional Cointegration of Geometric Functionals}
\author{Alessia Caponera\thanks{%
Department of AI, Data and Decision Sciences, LUISS, email:acaponera@luiss.it%
}, Domenico Marinucci\thanks{%
Department of Mathematics, University of Rome Tor Vergata,
email:marinucc@mat.uniroma2.it}, Anna Vidotto\thanks{%
Department SBAI, Sapienza University of Rome, email: anna.vidotto@uniroma1.it%
}}
\maketitle

\begin{abstract}
In this paper, we show that geometric functionals (e.g., excursion area,
boundary length) evaluated on excursion sets of sphere-cross-time long
memory random fields can exhibit fractional cointegration, meaning that some
of their linear combinations have shorter memory than the original vector.
These results prove the existence of long-run equilibrium relationships
between functionals evaluated at different threshold values; as a
statistical application, we discuss a frequency-domain estimator for the
Adler-Taylor metric factor, i.e., the variance of the field's gradient. Our
results are illustrated also by Monte Carlo simulations.

\begin{itemize}
\item Keywords and Phrases: Sphere-cross-time Random Fields, Geometric
Functionals, Long Memory, Fractional Cointegration

\item AMS Classification: 60G60; 62M10, 62M15, 62M40
\end{itemize}
\end{abstract}

\tableofcontents

\section{Introduction}

\subsection{Sphere-cross-time random fields and geometric functionals}

We consider a centered, unit-variance, Gaussian isotropic and stationary
sphere-cross-time random field, that is a collection $\{Z(x,t),\ (x,t)\in
\mathbb{S}^{2}\times \mathbb{Z}\}$ of jointly Gaussian random variables with
unit variance and such that
\begin{eqnarray*}
\mathbb{E}[Z(x,t)] &=&0, \\
\mathbb{E}[Z(x_{1},t_{1})Z(x_{2},t_{2})]&=:&\Gamma (\langle
x_{1},x_{2}\rangle ,t_{2}-t_{1}),
\end{eqnarray*}%
with $\langle \cdot ,\cdot \rangle $ being the standard inner product in $%
\mathbb{R}^{3}$. It is well-known (see, e.g., \cite{MaPeCUP}) that the space
of square-integrable functions on the sphere admits as an orthonormal basis
the fully-normalized spherical harmonics
\begin{equation*}
\left\{ Y_{\ell m},\ m=-\ell ,\dots ,\ell ,\ \ell =0,1,2,\dots \right\}
\text{ },
\end{equation*}
from which it is possible to obtain the following spectral representation
\begin{equation}
Z(x,t)=\sum_{\ell =0}^{\infty }\sum_{m=-\ell }^{\ell }a_{\ell m}(t)Y_{\ell
m}(x)\,,  \label{specrap}
\end{equation}
where $\left\{ a_{\ell m}(t):=\langle Z,Y_{\ell m}\rangle _{L^{2}(\mathbb{S}%
^{2})}\right\} _{\ell =0,1,...,m=-\ell ,...,\ell }$ are the so called
(random) spherical harmonics coefficients; under isotropy, for any given $%
\ell $ the processes $a_{\ell m}(.)$ are independent and have the same law,
for all $m=-\ell ,...,\ell $. The representation \ref{specrap} is often
called the multipole expansion of the field $Z$, where $Z_{\ell
}(x,t):=\sum_{m=-\ell }^{\ell }a_{\ell m}(t)Y_{\ell m}(x)$ is the $\ell $-th
order multipole of $Z$; however, with an abuse of language, the index $\ell $
is also called a multipole when no confusion is possible.

We assume that there exists at least a $\ell $ such that the stochastic
processes $a_{\ell m}(.)$ are long memory, meaning that their covariance
function $E[a_{\ell m}(t_{2})a_{\ell m}(t_{1})]=C_{\ell }(t_{2}-t_{1})$ is
such that%
\begin{equation*}
\lim_{\tau \rightarrow \infty }\frac{C_{\ell }(\tau )}{C_{\ell }(0)(1+\tau
)^{2d_{\ell }-1}}=const>0\,,\quad 0<d_{\ell }<\frac{1}{2}\,.
\end{equation*}%
In particular, note that $d_{\ell }>d_{\ell ^{\prime }}$ implies that the
multipole $\ell $ has ``longer memory'' than the multipole $\ell ^{\prime }$%
- i.e., covariances that decay at a slower rate. As a consequence, we will
assume that there exists
\begin{equation*}
d_{\ast }:=\max \{d_{\ell }:\ell \in \widetilde{\mathbb{N}},\ell \geq
1\},\quad \text{ with }\quad \widetilde{\mathbb{N}}:=\{\ell \in \mathbb{N}%
:C_{\ell }(0)\neq 0\}\,,
\end{equation*}
and we are going to denote by $d_{\ast \ast }:=\max \{d_{\ell }:\ell \in
\mathbb{N}\setminus \mathcal{I}^{\ast }\}$ and by $\widetilde{d}%
_{\ast}:=d_{\ast } \vee d_{0}$. Moreover, the set $\mathcal{I}^{\ast
}:=\{\ell \in \mathbb{N}:d_{\ell }=\widetilde{d}_{\ast }\}$ represent the
multipoles of the field $Z$ with the longest memory (clearly, $\widetilde{d}%
_{\ast }\in \left( 0,\frac{1}{2}\right) $). Essentially, the covariance
decay with respect to time of $Z(\cdot,\cdot)$, as the lag $\tau \rightarrow
\infty $, is completely determined by these subsets of harmonic components:
\begin{equation*}
\sum_{\ell \in \mathcal{I}^{\ast }}^{\infty }Z_{\ell }(x,t)=\sum_{\ell \in
\mathcal{I}^{\ast }}^{\infty }\sum_{m=-\ell }^{\ell }a_{\ell m}(t)Y_{\ell
m}(x).
\end{equation*}

We are interested in the following local geometric functionals, that are
just the area and the boundary length of the $u$-level excursion sets of the
random field $Z(x,t)$, for $u\in \mathbb{R},$
\begin{align*}
A(u;t) &=\text{area}(Z(x,t)> u)=\int_{\mathbb{S}^2}\mathbf{1}%
_{\{Z(x,t)>u\}}dx\,, \\
\mathcal{L}(u;t) &=\text{length}(Z(x,t)=u)=\int_{\mathbb{S}%
^2}\delta_u(Z(x,t)) \|\nabla Z(x,t)\|dx\,.
\end{align*}

In \cite{AAP2021,AHL2024}, the following Wiener chaos expansions for $A(u;t)$
and $\mathcal{L}(u;t)$ have been established:
\begin{equation*}
A(u;t)-\mathbb{E}[A(u;t)]=\sum_{q=1}^{\infty }\frac{H_{q-1}(u)\phi (u)}{q!}%
\int_{\mathbb{S}^{2}}H_{q}(Z(x,t))dx
\end{equation*}%
\begin{align}
& =\frac{1}{\sqrt{4\pi }}\phi (u)a_{00}(t)+u\phi (u)\sum_{\ell }C_{\ell
}(2\ell +1)\left\{ \widehat{C}_{\ell }(t)-C_{\ell }\right\}  \notag \\
& \quad +\sum_{q=3}^{\infty }\frac{H_{q-1}(u)\phi (u)}{q!}\int_{\mathbb{S}%
^{2}}H_{q}(Z(x,t))dx  \label{eq:chaotic-exp-A}
\end{align}%
and%
\begin{align}
&\mathcal{L}(u;t)-\mathbb{E}[\mathcal{L}(u;t)] =\sigma _{1}\sqrt{2}\pi u\phi
(u)a_{00}(t)  \label{eq:chaotic-exp-L} \\
&+\frac{\sigma _{1}}{2}\sqrt{\frac{\pi }{2}}\phi (u)\sum_{\ell }(2\ell
+1)\left\{ (u^{2}-1)+\frac{\lambda _{\ell }/2}{\sigma _{1}^{2}}\right\}
\left\{ \widehat{C}_{\ell }(t)-C_{\ell }\right\} +\sum_{q=3}^{\infty }%
\mathcal{L}(u,t)[q]\,,  \notag
\end{align}%
where $\lambda _{\ell }=\ell (\ell +1)$ is (minus) the $\ell $-th eigenvalue
of the spherical Laplacian and
\begin{equation*}
\widehat{C}_{\ell }(t):=\sum_{m=-\ell }^{\ell }\frac{a_{\ell ,m}(t)^{2}}{%
2\ell +1}\,\text{\ ;}
\end{equation*}%
here we denoted by $\mathcal{L}(u,t)[q]$ the $q$-th order chaotic component
of the length functional; we will denote analogously by $A(u,t)[q]$ the $q$%
-th order chaotic component of the area functional. Note that the terms that
involve $\widehat{C}_{\ell }(t)$ are the second order chaotic components of
the two functionals, that is
\begin{align*}
A(u,t)[2]& =u\phi (u)\sum_{\ell }C_{\ell }(2\ell +1)\left\{ \widehat{C}%
_{\ell }(t)-C_{\ell }\right\} \\
\mathcal{L}(u,t)[2]& =\frac{\sigma _{1}}{2}\sqrt{\frac{\pi }{2}}\phi
(u)\sum_{\ell }(2\ell +1)\left\{ (u^{2}-1)+\frac{\lambda _{\ell }/2}{\sigma
_{1}^{2}}\right\} \left\{ \widehat{C}_{\ell }(t)-C_{\ell }\right\} \,,
\end{align*}%
while the terms that involve $a_{00}(t)$ are the first order chaotic
components of the two functionals; see \cite{AHL2024} for analytic
expressions of $\mathcal{L}(u,t)[q],$ $q\geq 3$. It is important to stress
that as we took time to be discrete and thus countable, we are avoiding any
measurability issue for these functionals.

Time dependent random fields on the spheres (or on other manifolds) have
been the object of quite a few papers recently, see for instance \cite%
{AAP2021}, \cite{AHL2024}, \cite{AoS2001}, \cite{Caponera2024}, \cite%
{CaponeraLasso},\cite{CaponeraSpharma}\cite{LeonenkoMedina},\cite{MalPorcu},%
\cite{Ovalle},\cite{Porcu2023} and the references therein; for geometric
functionals, we refer to the classical textbooks by \cite{AdlerTaylor},\cite%
{AzaisWschebor}.\newline

We need now to recall the notion of fractional cointegration from the
statistical literature. A $p\times 1$ stationary process $X(t)$, possibly
vector-valued (i.e., $p>1$), is \emph{fractionally integrated of order} $d$,
writing $X(t)\sim I(d)$, if it is long memory with autocorrelation function $%
\rho$ satisfying
\begin{equation*}
\lim_{\tau \rightarrow \infty }\frac{\rho(\tau)}{(1+\tau)^{2d-1}}=\text{%
constant}>0\,,\quad 0<d<\frac{1}{2}\,.
\end{equation*}
Moreover, we say that $X(t)\sim I(d)$ is \emph{fractionally cointegrated of
order} $b$, written $CI(d,b),$ if there exist a $p\times 1$ vector $\gamma $
such that $\gamma ^{\prime }X(t)\sim I(d-b),$ $0<b\leq d.$ We will say that
the vector $X(t)$ is \emph{multi cointegrated} if there exist multiple
vectors $\gamma _{1},...,\gamma _{m}$ such that $\gamma _{i}^{\prime
}X(t)\sim I(d-b_{i}),$ $0<b_{i}\leq d,$ $m<p.$ We call $m$ the cointegrating
rank of $X(t).$

As mentioned earlier, the notion of fractional cointegration has emerged in
the time series literature in the late nineties-early 2000, as a natural
evolution of the very successful (integer-valued) cointegration paradigm
which was introduced in the time series econometrics literature form the
eighties. Although the possibility of fractional cointegration was hinted
already in some previous works, the first paper where a rigorous attempt to
a definition and statistical analysis was given is \cite{Robinson1994}; some
further developments were given a few years afterwards in \cite{RM2001},
\cite{MR2001}, \cite{RM2003}, \cite{ChenHurvich2003}, \cite{ChenHurvich2006}
and then by a very rich literature, see for instance \cite{Iaco2019}, \cite%
{Johansen2019}, \cite{Robinson2019} and the references therein for more
recent contributions.\newline

Our purpose in this paper is to show how the same concept emerges naturally
(and surprisingly) in the analysis of some geometric functionals for
sphere-cross-time random fields with long range dependence.

N\noindent \textbf{otation}. From now on, $c\in (0,+\infty )$ will stand for
a universal constant which may change from line to line. Let $\{a_{n},n\geq
0\}$, $\{b_{n},n\geq 0\}$ be two sequences of positive numbers: we will
write $a_{n}\simeq b_{n}$ if $a_{n}/b_{n}\rightarrow 1$ as $n\rightarrow
+\infty $, $a_{n}\approx b_{n}$ whenever $a_{n}/b_{n}\rightarrow c>0$, $%
a_{n}=o(b_{n})$ if $a_{n}/b_{n}\rightarrow 0$, and finally $a_{n}=O(b_{n})$
if eventually $a_{n}/b_{n}\leq c$.

\subsection{Memory of the field \emph{vs} memory of the functionals}

\label{sec:memory}

Before going into the main results of our work, let us discuss briefly the
relationship between the dependence properties of the primitive random field
$Z=\{Z(x,t),\ (x,t)\in \mathbb{S}^{2}\times \mathbb{Z}\}$ and the memory of
the geometric functionals considered here. Recall that the covariance decay
of $Z$ is completely determined by the multipoles $Z_{\ell }$ such that $%
d_{\ell }=\widetilde d_{\ast }$, that is
\begin{equation*}
Z_{\ast }(x,t)=\sum_{\ell: d_{\ell}=\widetilde d_{\ast }}Z_{\ell }(x,t)
\end{equation*}%
and hence its order of integrations is simply given by $\widetilde d_{\ast }$%
, that is
\begin{equation*}
Z(x,t)\sim I\left(\widetilde d_{\ast }\right)\sim Z_{\ast }(x,t)\,.
\end{equation*}
On the other hand, we have that our geometric functionals can be written as
a sum of their chaotic components, as we have seen in %
\eqref{eq:chaotic-exp-A} and \eqref{eq:chaotic-exp-L}. Hence, in order to
establish the memory of $A(u,t)$ and $\mathcal{L}(u,t)$ we investigate the
covariance decay of their chaotic components. Both the first chaoses of $%
A(u,t)$ and $\mathcal{L}(u,t)$ are given by some constants (involving the
level $u$) times the zeroth Fourier components of the original field $Z(x,t)$%
, that is $a_{00}(t)$; \ as a consequence, it is immediate to get that
\begin{equation*}
A(u,t)[1]\,,\,\mathcal{L}(u,t)[1]\sim I(d_{0})\,.
\end{equation*}
For the $q$-th order chaotic component, $q\geq 2$, which involves integrals
of $q$-th order Hermite polynomials evaluated at $Z(x,t)$, it holds
\begin{equation}
A(u,t)[q]\,,\,\mathcal{L}(u,t)[q]\sim I\left( q\,\widetilde{d}_{\ast }-\frac{%
q-1}{2}\right) \,.  \label{int-order-q}
\end{equation}%
This is a consequence of the so-called \emph{diagram formula} (see \cite[%
Proposition 4.15]{MaPeCUP}), which relates the moments of Hermite
polynomials of Gaussian random variables with powers of their covariances;
in our case we obtain:
\begin{equation*}
\mathbb{E}\left[ H_{q}(Z(x,t))H_{q}(Z(y,s))\right] =q!\,\mathbb{E}\left[
Z(x,t)Z(y,s)\right] ^{q}\,.
\end{equation*}%
Since, for any fixed $d \in (0,1/2)$, $q\,d-(q-1)/{2}$ is a decreasing
function in $q$, the order of integration of $A(u,t)$ and $\mathcal{L}(u,t)$
is determined either by the relationship between $d_{0}$ and $2\,d_{\ast
}-1/2$ (in the cases in which the second order chaotic component do not
vanish) or by the one between $d_{0}$ and $3d_{\ast }-1$. We are neglecting
for brevity a further case, namely the one of asymptotically monochromatic
random fields (see \cite{AHL2024} and below) where a cancellation at some
specific threshold level $u^{\ast }$ introduces two further alternatives for
the boundary length. \newline

We are now in the position to establish our main results.

\section{Main Results}

\subsection{Fractional cointegration of geometric functionals}

Let us define the vector processes%
\begin{eqnarray*}
A(u_{1},...,u_{p_{1}};t) &=&(A(u_{1};t),...,A(u_{p_{1}};t))^{\prime
}\,,\quad u_{1},...,u_{p_{1}}\in \mathbb{R}\,,\quad \\
\mathcal{L(}v_{1},...,v_{p_{2}};t\mathcal{)} &\mathcal{=}&\mathcal{(L(}%
v_{1};t\mathcal{)},...,\mathcal{L(}v_{p_{2}};t\mathcal{))}^{\prime }\, ,
\quad v_{1},...,v_{p_{2}}\in \mathbb{R}\,.
\end{eqnarray*}

Let us now introduce some more notation, namely the ($p_{1}-1)\times p_{1}$
matrices%
\begin{equation*}
\Gamma _{1}(u_{1},...,u_{p_{1}}):=\left(
\begin{array}{ccccc}
1 & -\frac{\phi (u_{1})}{\phi (u_{2})} & 0 & ... & 0 \\
1 & 0 & -\frac{\phi (u_{1})}{\phi (u_{3})} & ... & 0 \\
... & .... & ... & ... & ... \\
1 & 0 & ... & ... & -\frac{\phi (u_{1})}{\phi (u_{p_{1}})}%
\end{array}%
\right) \,\text{\ },
\end{equation*}%
and%
\begin{equation*}
\widetilde{\Gamma }_{1}(u_{1},...,u_{p_{1}})=\left(
\begin{array}{ccccc}
1 & -\frac{u_{1}\phi (u_{1})}{u_{2}\phi (u_{2})} & 0 & ... & 0 \\
1 & 0 & -\frac{u_{1}\phi (u_{1})}{u_{3}\phi (u_{3})} & ... & 0 \\
... & .... & ... & ... & ... \\
1 & 0 & ... & ... & -\frac{u_{1}\phi (u_{1})}{u_{p_{1}}\phi (u_{p_{1}})}%
\end{array}%
\right) \,.
\end{equation*}%
Our first results are the following

\begin{proposition}
\label{prop1} For $u_{1},...,u_{p_{1}}\in \mathbb{R}\backslash 0,$ and $%
d_{0}>2d_{\ast}-\frac{1}{2},$ the vector process $A(u_{1},...,u_{p_{1}};t)$
is fractionally cointegrated of rank $p_{1}-1$ and order:
\begin{equation*}
CI(d_0,d_0-(2d_{\ast}-{1}/{2})).
\end{equation*}

For $u_{1},...,u_{p_{1}}\in \mathbb{R}\backslash 0,$ and $d_{0}<2d_{\ast}-%
\frac{1}{2},$ the vector process $A(u_{1},...,u_{p_{1}};t)$ is fractionally
cointegrated of rank $p_{1}-1$ and order:
\begin{equation*}
CI(2d_{\ast}-{1}/{2},(2d_{\ast}-{1}/{2})-[d_0\vee (3d_\ast-1)]).
\end{equation*}

The cointegrating space is spanned by the vectors given by the rows of $\
\Gamma _{1}(u_{1},...,u_{p_{1}})$ and $\widetilde{\Gamma }%
_{1}(u_{1},...,u_{p_{1}}),$ respectively in the first and second case.
\end{proposition}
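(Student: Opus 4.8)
The plan is to read everything off the chaos expansion \eqref{eq:chaotic-exp-A} together with the order‑of‑integration bookkeeping of Section~\ref{sec:memory}. The key structural point is that, for every $q\ge1$, the $q$‑th chaotic component of $A(u;t)$ is a \emph{fixed}, $u$‑independent process multiplied by a deterministic function of the level $u$: reading \eqref{eq:chaotic-exp-A} schematically, $A(u;t)[1]$ is a nonzero constant times $\phi(u)\,a_{00}(t)$, $A(u;t)[2]$ is a nonzero constant times $u\,\phi(u)\,W(t)$ with $W(t)$ the common factor appearing in \eqref{eq:chaotic-exp-A}, and $A(u;t)[q]=\frac{\phi(u)}{q!}H_{q-1}(u)\,V_q(t)$ with $V_q(t)=\int_{\mathbb S^2}H_q(Z(x,t))\,dx$ for $q\ge3$. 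By \eqref{int-order-q} (an instance of the diagram formula) one has $a_{00}\sim I(d_0)$, $W\sim I(2d_\ast-1/2)$ and $V_q\sim I(qd_\ast-(q-1)/2)$; since $q\mapsto qd_\ast-(q-1)/2$ is strictly decreasing, the chaoses of order $\ge3$ contribute at most $I(3d_\ast-1)$, strictly below $I(2d_\ast-1/2)$. The argument then splits into the two regimes of the statement.

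In the first regime, $d_0>2d_\ast-1/2$, the first chaos dominates, and because its prefactor $\phi(u_i)$ never vanishes we get $A(u_i;t)\sim I(d_0)$ for each $i$, hence the vector process is $I(d_0)$. For a vector $\gamma$, the first chaos of $\gamma'A(u_1,\dots,u_{p_1};t)$ is a constant times $\big(\sum_i\gamma_i\phi(u_i)\big)a_{00}(t)$, so it cancels exactly when $\sum_i\gamma_i\phi(u_i)=0$; in that case the leading surviving term is the second chaos, $\big(\sum_i\gamma_i u_i\phi(u_i)\big)W(t)$, whence $\gamma'A\sim I(2d_\ast-1/2)$ as long as $\sum_i\gamma_i u_i\phi(u_i)\ne0$. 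I would then check that the $p_1-1$ rows of $\Gamma_1(u_1,\dots,u_{p_1})$ satisfy $\sum_i\gamma_i\phi(u_i)=0$, are linearly independent, and (a one‑line computation) have $\sum_i\gamma_i u_i\phi(u_i)\ne0$ whenever the $u_i$ are distinct; conversely any $\gamma$ with $\sum_i\gamma_i\phi(u_i)\ne0$ retains the first chaos and stays $I(d_0)$. So the cointegrating space is exactly the row span of $\Gamma_1$ (dimension $p_1-1$, which is already the maximum possible), and the order is $CI(d_0,d_0-(2d_\ast-1/2))$.

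In the second regime, $d_0<2d_\ast-1/2$, the second chaos dominates; since $u_i\ne0$ its prefactor $u_i\phi(u_i)\ne0$, so $A(u_i;t)\sim I(2d_\ast-1/2)$ and the vector is $I(2d_\ast-1/2)$. Now the second chaos of $\gamma'A$ is proportional to $\big(\sum_i\gamma_i u_i\phi(u_i)\big)W(t)$, which cancels when $\sum_i\gamma_i u_i\phi(u_i)=0$; the leading survivor is then the first chaos (coefficient $\propto\sum_i\gamma_i\phi(u_i)$, order $I(d_0)$) or the third chaos (coefficient $\propto\sum_i\gamma_i H_2(u_i)\phi(u_i)$, order $I(3d_\ast-1)$), whichever has longer memory, so $\gamma'A\sim I\big(d_0\vee(3d_\ast-1)\big)$ provided the governing coefficient is nonzero. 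This I would verify directly for the rows of $\widetilde\Gamma_1(u_1,\dots,u_{p_1})$, which satisfy $\sum_i\gamma_i u_i\phi(u_i)=0$, are independent, and span the whole cointegrating space; this gives rank $p_1-1$ and order $CI\big(2d_\ast-1/2,\,(2d_\ast-1/2)-[d_0\vee(3d_\ast-1)]\big)$.

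The bookkeeping of the orders of integration of $a_{00}$, $W$ and the $V_q$ is routine (it is exactly \eqref{int-order-q}). The only point that requires care — and it is what pins down the \emph{order} of the memory reduction rather than merely its existence — is showing that killing the dominant chaos does not simultaneously annihilate the next one: i.e.\ that the explicit rows of $\Gamma_1$, respectively $\widetilde\Gamma_1$, yield a nonzero value of $\sum_i\gamma_i u_i\phi(u_i)$ (first regime), respectively of $\sum_i\gamma_i\phi(u_i)$ or of $\sum_i\gamma_i H_2(u_i)\phi(u_i)$ (second regime). Each reduces to a short polynomial identity in the $u_i$ that is nonzero once the $u_i$ are distinct (and, in the second regime when the third chaos is the leading survivor, also avoid the exceptional relation $u_1u_j=-1$).
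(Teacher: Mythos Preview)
Your proposal is correct and follows essentially the same route as the paper's proof: both use the chaos expansion \eqref{eq:chaotic-exp-A}, observe that the $q$-th chaotic component factors as a deterministic function of $u$ times a $u$-independent process, and then check that the rows of $\Gamma_1$ (resp.\ $\widetilde\Gamma_1$) annihilate the dominant chaos while leaving the next one intact. In fact you are slightly more careful than the paper, which computes only for a single representative row and does not explicitly verify the non-vanishing of the surviving coefficient (your last paragraph, including the exceptional relation $u_1u_j=-1$ in the second regime, is a genuine refinement that the paper omits).
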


\begin{proof}
First assume that $d_{0}>2d_{\ell^{\ast }}-\frac{1}{2}$. Take the first row
of $\Gamma_1$, that is the $1\times p_1$ vector
\begin{equation*}
\gamma_1=\gamma_1(u_{1},u_{2}):=\left(%
\begin{array}{ccccc}
1 & -\frac{\phi (u_{1})}{\phi (u_{2})} & 0 & \cdots & 0%
\end{array}%
\right)\,,
\end{equation*}
then
\begin{align*}
\gamma_1 A(u_{1},...,u_{p_{1}};t)&= A(u_{1},t)-\frac{\phi (u_{1})}{\phi
(u_{2})}A(u_{2},t)\,.
\end{align*}
Exploiting the chaos expansion of the geometric functional $A(u;t)$, that is %
\eqref{eq:chaotic-exp-A}, we have
\begin{align*}
&A(u_{1},t)-\frac{\phi (u_{1})}{\phi (u_{2})}A(u_{2},t) \\
&=\mathbb{E}[A(u_{1},t)]+\frac{1}{\sqrt{4\pi }}\phi (u_1)a_{00}(t)
+\sum_{q=2}^{\infty }A(u_1,t;q) \\
&\quad-\frac{\phi (u_{1})}{\phi (u_{2})} \left[\mathbb{E}[A(u_{2},t)]+\frac{1%
}{\sqrt{4\pi }}\phi (u_2)a_{00}(t)+\sum_{q=2}^{\infty }A(u_2,t;q)\right] \\
&=\mathbb{E}[A(u_{1},t)]-\frac{\phi (u_{1})}{\phi (u_{2})}\mathbb{E}%
[A(u_{2},t)]+\sum_{q=2}^{\infty }A(u_1,t;q)-\frac{\phi (u_{1})}{\phi (u_{2})}%
\sum_{q=2}^{\infty }A(u_2,t;q)\,.
\end{align*}
Since the first chaotic components $1/\sqrt{4\pi}\phi (u_{i})a_{00}(t)\sim
I(d_{0})$ of the excursion areas at different levels $u_{i}, \,
i=1,\dots,p_{1}$ have been cancelled by the premultiplication by $\gamma_{1}$%
, it is clear that the order of integration of $\gamma_1
A(u_{1},...,u_{p_{1}};t)$ is determined by the second chaotic components of
the remaining terms, that is
\begin{equation*}
\gamma_1 A(u_{1},...,u_{p_{1}};t)\sim I\left(2d_{\ast}-\frac12\right)\,.
\end{equation*}

Now assume that $d_{0}<2d_{\ell^{\ast }}-\frac{1}{2}$. Take the first row of
$\widetilde{\Gamma }$, that is the $1\times p_1$ vector
\begin{equation*}
\widetilde\gamma_1=\widetilde\gamma_1(u_{1},u_{2}):=\left(%
\begin{array}{ccccc}
1 & -\frac{u_{1}\phi (u_{1})}{u_{2}\phi (u_{2})} & 0 & \cdots & 0%
\end{array}%
\right)\,,
\end{equation*}
then
\begin{align*}
&\widetilde\gamma_1 A(u_{1},...,u_{p_{1}};t)= A(u_{1};t)-\frac{u_{1}\phi
(u_{1})}{u_{2}\phi (u_{2})}A(u_{2};t) \\
&=\mathbb{E}[A(u_{1};t)]-\frac{u_{1}\phi (u_{1})}{u_{2}\phi (u_{2})}\mathbb{E%
}[A(u_{2};t)]+\frac{1}{\sqrt{4\pi }}\phi (u_1)\left(1-\frac{u_1}{u_2}%
\right)a_{00}(t) \\
&\quad +\frac{\phi(u_1)}{6}\left[H_2(u_1)-\frac{u_1}{u_2}H_2(u_2)\right]%
\int_{S^2}H_3(Z(x,t))dx+R_4(u_1,u_2)\,,
\end{align*}
where $R_4(u_1,u_2)$ is just the sum of the chaotic components of order $%
q\ge4$. Since the second chaotic components of the excursion areas at
different levels $u_{i}, \, i=1,\dots,p_{1}$ have been cancelled by the
premultiplication by $\widetilde\gamma_{1}$, it is clear that the order of
integration of $\widetilde\gamma_1 A(u_{1},...,u_{p_{1}};t)$ is determined
by either the first or the third chaotic components of the remaining terms,
that is (see also Section \ref{sec:memory})
\begin{equation*}
\widetilde\gamma_1 A(u_{1},...,u_{p_{1}};t)\sim I(d_{0}\vee3d_\ast-1)\,,
\end{equation*}
and this concludes the proof.
\end{proof}

It is important to stress that fractional cointegrating relationships are
not unique - not only in the sense that they actually live in a $p_{1}-1$
dimensional space spanned by the vectors we listed in Proposition \ref{prop1}%
, but also in a more subtle sense. Indeed, there can exist stronger
cointegrating relationships, meaning linear combinations that have shorter
and shorter memory. More precisely, in the next proposition we show that for
a suitably large vector evaluated at different thresholds, there always
exist a cointegrating relationship yielding short memory residuals. Indeed,
define the $(q_{\ast }+1)\times p_{1}$ matrix, with $p_{1}\geq q_{\ast }+1$,
\begin{equation*}
X_{A(u_{1},...,u_{p_{1}};t)}^{q_{\ast }}=X_{A}:=\left(
\begin{array}{ccc}
\phi (u_{1}) & \cdots & \phi (u_{p_{1}}) \\
\phi (u_{1})H_{1}(u_{1}) & \cdots & \phi (u_{p_{1}})H_{1}(u_{p_{1}}) \\
\vdots & \ddots & \vdots \\
\phi (u_{1})H_{q_{\ast }}(u_{1}) & \cdots & \phi (u_{p_{1}})H_{q_{\ast
}}(u_{p_{1}})%
\end{array}%
\right) \,;
\end{equation*}%
we have the following statement.

\begin{proposition}
\label{prop2} Let
\begin{equation*}
q_{\ast }=\left[ \frac{1}{1-2d_{\ast }}\right] +1\,,
\end{equation*}%
where $[\cdot ]$ denotes the integer part; then any $p_{1}\times 1$ vector $%
\gamma _{2}$ such that $\gamma _{2}^{\prime }X_{\mathcal{A}}=0$ is
cointegrating: in particular,
\begin{equation*}
\gamma _{2}^{\prime }A(u_{1},...,u_{p_{1}};t)\sim I(0).
\end{equation*}
\end{proposition}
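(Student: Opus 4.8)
The plan is to cancel, by means of a single vector $\gamma_2$, \emph{all} the long-memory chaotic components of $A(u_1,\dots,u_{p_1};t)$ — not just the first one, as in Proposition~\ref{prop1} — and then to verify that the remaining, entirely short-memory, tail of the chaos expansion still adds up to an $I(0)$ process. First I would set $T_q(t):=\int_{\mathbb{S}^{2}}H_q(Z(x,t))\,dx$, which by the diagram formula lies in the $q$-th Wiener chaos, so that $\mathbb{E}[T_q(t)T_{q'}(s)]=0$ whenever $q\neq q'$, and expand \eqref{eq:chaotic-exp-A} componentwise:
\begin{equation*}
\gamma_2'\bigl(A(u_1,\dots,u_{p_1};t)-\mathbb{E}[A(u_1,\dots,u_{p_1};t)]\bigr)=\sum_{q\geq 1}c_q\,T_q(t),\qquad c_q:=\frac{1}{q!}\sum_{j=1}^{p_1}(\gamma_2)_j\,\phi(u_j)H_{q-1}(u_j).
\end{equation*}
The point that makes $X_A$ enter is that, for $1\leq q\leq q_\ast+1$, the number $q!\,c_q$ is exactly the inner product of $\gamma_2$ with the row of $X_A$ built from $H_{q-1}$; hence the hypothesis $\gamma_2'X_A=0$ — i.e.\ $\gamma_2$ orthogonal to every row of $X_A$ — is precisely the requirement $c_1=\dots=c_{q_\ast+1}=0$. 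Such a (nonzero) $\gamma_2$ is available because the kernel of the $(q_\ast+1)\times p_1$ matrix $X_A$ has dimension at least $p_1-q_\ast-1$. For such a $\gamma_2$ only the chaoses of order $q\geq q_\ast+2$ survive, so $\gamma_2'(A(u_1,\dots,u_{p_1};t)-\mathbb{E}[\cdot])=\sum_{q\geq q_\ast+2}c_q\,T_q(t)$.

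Next I would pin down the memory of each surviving term. By \eqref{int-order-q}, $T_q(t)\sim I\bigl(q\widetilde d_\ast-\tfrac{q-1}{2}\bigr)$, and the definition of $q_\ast$ is tailored so that $q\widetilde d_\ast-\tfrac{q-1}{2}<0$ for every $q\geq q_\ast$, hence a fortiori for all the surviving indices $q\geq q_\ast+2$; thus each surviving chaotic component is short-range dependent. What remains — and this, beyond the linear-algebra remark above, is the real substance of the statement — is to show that the infinite sum of these short-memory components is itself $I(0)$, i.e.\ that $\sum_{\tau\geq 1}\bigl|\mathrm{Cov}\bigl(\gamma_2'A(\cdot;0),\gamma_2'A(\cdot;\tau)\bigr)\bigr|<\infty$. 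Using orthogonality of distinct chaoses together with the diagram formula $\mathbb{E}[H_q(Z(x,0))H_q(Z(y,\tau))]=q!\,\Gamma(\langle x,y\rangle,\tau)^{q}$, this covariance equals $\sum_{q\geq q_\ast+2}c_q^{2}\,q!\iint_{\mathbb{S}^{2}\times\mathbb{S}^{2}}\Gamma(\langle x,y\rangle,\tau)^{q}\,dx\,dy$.

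Finally I would estimate that series. Splitting $|\Gamma|^{q}=|\Gamma|^{q-2}\Gamma^{2}$ and using that $\Gamma(\cdot,\tau)$ is a correlation (so $\|\Gamma(\cdot,\tau)\|_{\infty}\leq 1$), together with the bound $\|\Gamma(\cdot,\tau)\|_{\infty}=O(\tau^{2\widetilde d_\ast-1})$ — which follows from the assumed decay of the $C_\ell$ and the summability $\sum_{\ell}(2\ell+1)C_\ell(0)<\infty$ — one obtains $\iint|\Gamma(\langle x,y\rangle,\tau)|^{q}\,dx\,dy\leq c\,\tau^{q(2\widetilde d_\ast-1)}\leq c\,\tau^{(q_\ast+2)(2\widetilde d_\ast-1)}$ for $\tau\geq 1$, a quantity summable in $\tau$ \emph{uniformly in} $q\geq q_\ast+2$ (since $2\widetilde d_\ast-1<0$). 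Together with $\sum_{q}c_q^{2}\,q!<\infty$ — which follows by Cauchy--Schwarz from the finiteness of each $\sum_{q}\frac{(\phi(u_j)H_{q-1}(u_j))^{2}}{q!}$ (e.g.\ via Mehler's formula) — this lets one apply Fubini, interchange the sums over $q$ and $\tau$, and conclude that $\sum_{\tau}\bigl|\mathrm{Cov}(\cdots)\bigr|<\infty$, i.e.\ $\gamma_2'A(u_1,\dots,u_{p_1};t)\sim I(0)$. I expect this last step — the uniform-in-$q$ summability of the time tails, which rests on controlling all multipoles of $\Gamma$ at once — to be the only genuinely technical part; the conceptual ingredients (that one vector annihilates every long-memory chaos, thanks to the Vandermonde-type structure of the coefficients $\phi(u_j)H_{q-1}(u_j)$, and that the surviving orders are negative by the choice of $q_\ast$) are immediate.
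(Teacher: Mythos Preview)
Your argument is essentially the paper's: annihilate chaoses $1,\dots,q_\ast+1$ via the orthogonality $\gamma_2' X_A=0$, then verify that the remaining tail $\sum_{q\geq q_\ast+2}c_qT_q$ has summable autocovariance; the paper does this last step in one line by approximating the tail covariance by its leading chaotic term $\approx C_\ell(\tau)^{q_\ast+1}$, whereas you (correctly, and more carefully) supply a uniform-in-$q$ bound together with $\sum_q c_q^2\,q!<\infty$. One small fix: the intermediate inequality $\iint|\Gamma|^q\,dx\,dy\leq c\,\tau^{q(2\widetilde d_\ast-1)}$ with $c$ \emph{independent of $q$} does not follow from the split $|\Gamma|^{q-2}\Gamma^2$ as written (the implied constant in $\|\Gamma(\cdot,\tau)\|_\infty=O(\tau^{2\widetilde d_\ast-1})$ may exceed $1$, so raising it to the $q$-th power is not harmless); it is cleaner to use $|\Gamma|^q\leq|\Gamma|^{q_\ast+2}$ for all $q\geq q_\ast+2$ (since $|\Gamma|\leq 1$) and then bound $\iint|\Gamma|^{q_\ast+2}\leq(4\pi)^2\|\Gamma(\cdot,\tau)\|_\infty^{q_\ast+2}$, which gives directly the uniform bound $c\,\tau^{(q_\ast+2)(2\widetilde d_\ast-1)}$ you need.
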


\begin{proof}
Recall the chaotic expansion of the area functionals, which is given by
\begin{equation*}
A(u)=\mathbb{E}[A(u)]+\phi (u)J_{0}+H_{1}(u)\phi (u)J_{1}+H_{2}(u)\phi
(u)J_{2}+H_{3}(u)\phi (u)J_{3}+\cdots \,,
\end{equation*}%
where $J_{k}=\frac{1}{(k+1)!}\int_{\mathbb{S}^{2}}H_{k+1}(Z(x,t))\,dx$. Let $%
\gamma _{2}$ be a $p_{1}\times 1$ vector such that $\gamma _{2}^{\prime }X_{%
\mathcal{A}}=0$, then
\begin{equation*}
\gamma _{2}^{\prime }\left(
\begin{array}{c}
\phi (u_{1})H_{k-1}(u_{1}) \\
\phi (u_{2})H_{k-1}(u_{2}) \\
\vdots \\
\phi (u_{p_{1}})H_{k-1}(u_{p_{1}})%
\end{array}%
\right) =0\,,\quad \mbox{for all}\quad k=1,...,q+1\,.
\end{equation*}%
As a consequence, denoting $\gamma _{2}:=(\gamma _{2,1}\quad \cdots \quad
\gamma _{2,p_{1}})^{\prime }$, we have
\begin{align*}
& \gamma _{2}^{\prime }A(u_{1},...,u_{p_{1}};t)=\gamma _{2}^{\prime }\left(
\begin{array}{c}
A(u_{1},t) \\
A(u_{2},t) \\
\vdots \\
A(u_{p_{1}},t)%
\end{array}%
\right) \\
& =\gamma _{2,1}A(u_{1},t)+\cdots +\gamma _{2,p_{1}}A(u_{p_{1}},t)
\end{align*}%
\begin{align*}
& =\gamma _{2}^{\prime }\left(
\begin{array}{c}
\mathbb{E}[A(u_{1})] \\
\mathbb{E}[A(u_{2})] \\
\vdots \\
\mathbb{E}[A(u_{p_{1}})]%
\end{array}%
\right) +J_{0}\,\gamma _{2}^{\prime }\left(
\begin{array}{c}
\phi (u_{1}) \\
\phi (u_{2}) \\
\vdots \\
\phi (u_{p_{1}})%
\end{array}%
\right) +\cdots +J_{q^{\ast }}\,\gamma _{2}^{\prime }\left(
\begin{array}{c}
\phi (u_{1})H_{q^{\ast }}(u_{1}) \\
\phi (u_{2})H_{q^{\ast }}(u_{2}) \\
\vdots \\
\phi (u_{p_{1}})H_{q^{\ast }}(u_{p_{1}})%
\end{array}%
\right) \\
& \quad +\gamma _{2}^{\prime }\left(
\begin{array}{c}
\sum_{q\geq q_{\ast }+1}A(u_{1},t;q) \\
\sum_{q\geq q_{\ast }+1}A(u_{2},t;q) \\
\vdots \\
\sum_{q\geq q_{\ast }+1}A(u_{p_{1}},t;q)%
\end{array}%
\right)
\end{align*}%
\begin{equation*}
=\gamma _{2}^{\prime }\left(
\begin{array}{c}
\mathbb{E}[A(u_{1})] \\
\mathbb{E}[A(u_{2})] \\
\vdots \\
\mathbb{E}[A(u_{p_{1}})]%
\end{array}%
\right) +\gamma _{2}^{\prime }\left(
\begin{array}{c}
\displaystyle\sum_{q\geq q_{\ast }+1}A(u_{1},t;q) \\
\displaystyle\sum_{q\geq q_{\ast }+1}A(u_{2},t;q) \\
\vdots \\
\displaystyle\sum_{q\geq q_{\ast }+1}A(u_{p_{1}},t;q)%
\end{array}%
\right) \sim I(0)\quad \text{by construction}.
\end{equation*}%
Indeed, one has that
\begin{align*}
& \text{Cov}\left( \displaystyle\sum_{q\geq q_{\ast }+1}A(u,0;q),%
\displaystyle\sum_{q\geq q_{\ast }+1}A(u,\tau ;q)\right) \\
& \approx C_{\ell }(\tau )^{q_{\ast }+1}\simeq \left( C_{\ell }(0)(1+\tau
)^{2d_{\ast }-1}\right) ^{q_{\ast }+1}\,,
\end{align*}%
which is integrable since $q_{\ast }=\left[ (1-2d_{\ast })^{-1}\right] +1$.
\end{proof}

\begin{remark}
Of course, there ia variety of intermediate cases which could also be
considered; take for instance $p_{1}=3$ and $q=1$, hence a $q$ which is
smaller than $\left[ (1-2d_{\ast })^{-1}\right] +1$. Then we can still find
a vector $\gamma _{2}$ that cancels out only the first two chaoses for the
three excursion levels we are considering. That is, we solve
\begin{equation*}
(1\quad \gamma _{2,2}\quad \gamma _{2,3})\left(
\begin{array}{c}
\phi (u_{1})H_{k-1}(u_{1}) \\
\phi (u_{2})H_{k-1}(u_{2}) \\
\phi (u_{3})H_{k-1}(u_{3})%
\end{array}%
\right) =0\,,\quad \mbox{for}\quad k=1,2\,,
\end{equation*}%
and obtain
\begin{equation*}
\gamma _{2}=\left( 1,\,\frac{\phi (u_{1})}{\phi (u_{2})}\left( \frac{%
H_{1}(u_{1})-H_{1}(u_{3})}{H_{1}(u_{3})-H_{1}(u_{2})}\right) ,\,-\frac{\phi
(u_{1})}{\phi (u_{3})}\left( \frac{H_{1}(u_{1})-H_{1}(u_{2})}{%
H_{1}(u_{3})-H_{1}(u_{2})}\right) \right) .
\end{equation*}%
As a consequence,
\begin{align*}
& \gamma _{2}A(u_{1},u_{2},u_{3};t) \\
& =\phi (u_{1})J_{0}+H_{1}(u_{1})\phi (u_{1})J_{1}+\sum_{q\geq
3}A(u_{1},t)[q] \\
& +\frac{\phi (u_{1})}{\phi (u_{2})}\left( \frac{H_{1}(u_{1})-H_{1}(u_{3})}{%
H_{1}(u_{3})-H_{1}(u_{2})}\right) \left[ \phi (u_{2})J_{0}+H_{1}(u_{2})\phi
(u_{2})J_{1}+\sum_{q\geq 3}A(u_{2},t)[q]\right] \\
& -\frac{\phi (u_{1})}{\phi (u_{3})}\left( \frac{H_{1}(u_{1})-H_{1}(u_{2})}{%
H_{1}(u_{3})-H_{1}(u_{2})}\right) \left[ \phi (u_{3})J_{0}+H_{1}(u_{3})\phi
(u_{3})J_{1}+\sum_{q\geq 3}A(u_{3},t)[q]\right] \\
& =\left[ 1+\left( \frac{H_{1}(u_{1})-H_{1}(u_{3})}{H_{1}(u_{3})-H_{1}(u_{2})%
}\right) -\left( \frac{H_{1}(u_{1})-H_{1}(u_{2})}{H_{1}(u_{3})-H_{1}(u_{2})}%
\right) \right] \phi (u_{1})J_{0} \\
& +\left[ H_{1}(u_{1})+H_{1}(u_{2})\left( \frac{H_{1}(u_{1})-H_{1}(u_{3})}{%
H_{1}(u_{3})-H_{1}(u_{2})}\right) -H_{1}(u_{3})\left( \frac{%
H_{1}(u_{1})-H_{1}(u_{2})}{H_{1}(u_{3})-H_{1}(u_{2})}\right) \right] \phi
(u_{1})J_{1} \\
& +\sum_{q\geq 3}\left\{ A(u_{1},t)[q]+\gamma
_{22}(u_{1},u_{2},u_{3})A(u_{2},t)[q]+\gamma
_{23}(u_{1},u_{2},u_{3})A(u_{3},t)[q]\right\} \\
& =\sum_{q\geq 3}\left\{ A(u_{1},t)[q]+\gamma
_{22}(u_{1},u_{2},u_{3})A(u_{2},t)[q]+\gamma
_{23}(u_{1},u_{2},u_{3})A(u_{3},t)[q]\right\} \,,
\end{align*}%
which means that $\gamma _{2}A(u_{1},u_{2},u_{3};t)\sim I(3d_{\ast }-1)$. In
particular, in the case $d_{\ast }$ is smaller than $1/3$ ($\implies 1=q>%
\left[ (1-2d_{\ast })^{-1}\right] $), we have $\gamma
_{2}A(u_{1},u_{2},u_{3};t)\sim I(0)$.
\end{remark}

Following the same steps as in the proof of Propositions \ref{prop1} and \ref%
{prop2}, we obtain also the following results (whose proofs follow simolar
steps and hence are omitted).

\begin{proposition}
For $v_{1},...,v_{p_{2}}\in \mathbb{R}\backslash 0,$ and $d_{0}>2d_{\ell
^{\ast }}-\frac{1}{2},$ the vector process $\mathcal{L(}v_{1},...,v_{p_{2}};t%
\mathcal{)}$ is fractionally cointegrated of rank $p_{2}-1$ and order:
\begin{equation*}
CI\left(d_0,d_0-(2d_{\ast}-{1}/{2})\right).
\end{equation*}

For $v_{1},...,v_{p_2}\in \mathbb{R}\backslash 0,$ and $d_{0}<2d_{\ast}-%
\frac{1}{2},$ the vector process $\mathcal{L(}v_{1},...,v_{p_{2}};t\mathcal{)%
}$ is fractionally cointegrated of rank $p_{2}-2$ and order:
\begin{equation*}
CI\left(2d_{\ast}-{1}/{2},(2d_{\ast}-{1}/{2})-d_0\right).
\end{equation*}

The cointegrating space in the first case is spanned by the vectors
orthogonal to $\left( v_{1}\phi (v_{1}),v_{2}\phi (v_{2}),...,v_{p_{2}}\phi
(v_{p_{2}})\right) ;$ in the second case, it is spanned by the vectors
orthogonal to the matrix%
\begin{equation*}
\left(
\begin{array}{ccccc}
\phi (v_{1}) & \phi (v_{2}) & ... & ... & \phi (v_{p_{2}}) \\
(v_{1}^{2}-1)\phi (v_{1}) & (v_{2}^{2}-1)\phi (v_{2}) & ... & ... &
(v_{p_{2}}^{2}-1)\phi (v_{p_{2}})%
\end{array}%
\right) \,.
\end{equation*}%
For $d_{0}=2d_{\ast}-\frac{1}{2},$ the vector process $\mathcal{L(}%
v_{1},...,v_{p_{2}};t\mathcal{)}$ is fractionally cointegrated of rank $%
p_{2}-3,$ and the cointegrating space is spanned by the vectors orthogonal
to the matrix%
\begin{equation*}
\left(
\begin{array}{ccccc}
\phi (v_{1}) & \phi (v_{2}) & ... & ... & \phi (v_{p_{2}}) \\
v_{1}\phi (v_{1}) & v_{2}\phi (v_{2}) & ... & ... & v_{p_{2}}\phi (v_{p_{2}})
\\
(v_{1}^{2}-1)\phi (v_{1}) & (v_{2}^{2}-1)\phi (v_{2}) & ... & ... &
(v_{p_{2}}^{2}-1)\phi (v_{p_{2}})%
\end{array}%
\right)
\end{equation*}
\end{proposition}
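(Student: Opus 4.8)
The plan is to repeat, for the boundary length, the scheme of Propositions~\ref{prop1} and~\ref{prop2}; the only genuinely new feature is that the second chaos of $\mathcal{L}$ depends on the level through a two-dimensional family rather than a one-dimensional one. From the chaos expansion \eqref{eq:chaotic-exp-L}, write $\mathcal{L}(v;t)-\mathbb{E}[\mathcal{L}(v;t)]=\mathcal{L}(v,t)[1]+\mathcal{L}(v,t)[2]+\sum_{q\ge3}\mathcal{L}(v,t)[q]$, with $\mathcal{L}(v,t)[1]=\sigma_1\sqrt2\,\pi\,v\phi(v)\,a_{00}(t)$ and, collecting the two $v$-profiles hidden inside the bracket $\{(v^2-1)+\lambda_\ell/(2\sigma_1^2)\}$,
\begin{equation*}
\mathcal{L}(v,t)[2]=\tfrac{\sigma_1}{2}\sqrt{\tfrac{\pi}{2}}\,(v^2-1)\phi(v)\,\Xi_1(t)+\tfrac{\sigma_1}{2}\sqrt{\tfrac{\pi}{2}}\,\tfrac{1}{2\sigma_1^{2}}\,\phi(v)\,\Xi_2(t),
\end{equation*}
where $\Xi_1(t):=\sum_\ell(2\ell+1)\{\widehat C_\ell(t)-C_\ell\}$ and $\Xi_2(t):=\sum_\ell(2\ell+1)\lambda_\ell\{\widehat C_\ell(t)-C_\ell\}$. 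The first-chaos and diagram-formula arguments recalled in Section~\ref{sec:memory} give $\mathcal{L}(v,t)[1]\sim I(d_0)$, $\mathcal{L}(v,t)[2]\sim I(2d_\ast-\tfrac12)$, and $\mathcal{L}(v,t)[q]\sim I(qd_\ast-\tfrac{q-1}{2})$, so the chaotic orders decrease strictly in $q$ and the memory of $\gamma'\mathcal{L}(v_1,\dots,v_{p_2};t)$ is fixed by the lowest-order chaos not annihilated by $\gamma$. Thus the task reduces, in each regime for $d_0$, to identifying the linear conditions on $\gamma$ that cancel the dominant chaos and to counting the dimension of the corresponding subspace.

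The point that distinguishes $\mathcal{L}$ from $A$ is that $\mathcal{L}(v,t)[2]$ lives, in $v$, on the span of $\phi(v)$ and $(v^2-1)\phi(v)$ (the extra direction carried by the Laplace eigenvalue $\lambda_\ell$), whereas $A(v,t)[2]$ uses only $v\phi(v)$. The key fact to verify is that this two-dimensionality is effective for the memory: a linear combination $c_1\Xi_1(t)+c_2\Xi_2(t)$ has memory strictly below $I(2d_\ast-\tfrac12)$ only if $c_1=c_2=0$. Indeed, by the diagram formula $\mathrm{Cov}(\Xi_j(0),\Xi_j(\tau))\approx\sum_{\ell\in\mathcal I^{\ast}}w^{(j)}_\ell\,C_\ell(\tau)^2$ with $w^{(1)}_\ell=2\ell+1$ and $w^{(2)}_\ell=(2\ell+1)\lambda_\ell$; these weight sequences are proportional on $\mathcal I^{\ast}$ only when $\mathcal I^{\ast}$ is a singleton (the asymptotically monochromatic case, excluded here — it is the one requiring the separate treatment mentioned in Section~\ref{sec:memory}), so when $\mathcal I^{\ast}$ carries at least two distinct values $\lambda_\ell$ no linear combination kills the leading term of order $\tau^{2(2d_\ast-1)}$. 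Consequently $\gamma'\mathcal{L}$ has vanishing second chaos exactly when $\sum_i\gamma_i\phi(v_i)=0$ and $\sum_i\gamma_i(v_i^2-1)\phi(v_i)=0$, and vanishing first chaos exactly when $\sum_i\gamma_i v_i\phi(v_i)=0$; the relevant rows of the statement are linearly independent once at least three of the $v_i$ are distinct (divide column $i$ by $\phi(v_i)>0$ and use a Vandermonde argument).

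It then remains to split into cases. For $d_0>2d_\ast-\tfrac12$ the first chaos dominates; cancelling it requires only $\gamma\perp(v_1\phi(v_1),\dots,v_{p_2}\phi(v_{p_2}))$, a $(p_2-1)$-dimensional space, and for such $\gamma$ the remainder is governed by the second chaos, whence $\gamma'\mathcal{L}\sim I(2d_\ast-\tfrac12)$ and $CI(d_0,d_0-(2d_\ast-\tfrac12))$ of rank $p_2-1$. For $d_0<2d_\ast-\tfrac12$ the second chaos dominates; it is cancelled exactly on the orthogonal complement of the two-rowed matrix of the statement, a $(p_2-2)$-dimensional space, and there the residual is carried by the first (and third) chaos, so $\gamma'\mathcal{L}\sim I(d_0\vee(3d_\ast-1))$ — strictly below $I(2d_\ast-\tfrac12)$, which yields the stated $CI$ order (with the same refinement $d_0\vee(3d_\ast-1)$ already appearing in Proposition~\ref{prop1}) and rank $p_2-2$. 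For $d_0=2d_\ast-\tfrac12$ the first and second chaos tie at $I(2d_\ast-\tfrac12)$, so both must be cancelled: $\gamma$ must be orthogonal to the three rows $(\phi(v_i))_i$, $(v_i\phi(v_i))_i$, $((v_i^2-1)\phi(v_i))_i$, a $(p_2-3)$-dimensional space, and for such $\gamma$ the residual is the third chaos, $\gamma'\mathcal{L}\sim I(3d_\ast-1)$, strictly shorter since $d_\ast<\tfrac12$. The only non-routine step is the memory-independence of $\Xi_1$ and $\Xi_2$ (plus the Vandermonde full-rank check on the $v_i$ and the exclusion of the monochromatic cancellation level); everything else is exactly the bookkeeping of Propositions~\ref{prop1}--\ref{prop2}.
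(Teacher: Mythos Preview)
Your proof is correct and follows exactly the approach the paper indicates: the paper omits the proof entirely, stating only that it ``follows the same steps as in the proof of Propositions~\ref{prop1} and~\ref{prop2}''. Your explicit decomposition of $\mathcal{L}(v,t)[2]$ into the $\Xi_1,\Xi_2$ components, the memory-independence argument for them, and the caveat about the asymptotically monochromatic case ($\mathcal{I}^\ast$ a singleton) are precisely the extra details needed to make the ``same steps'' go through for the boundary length, and the paper does not spell them out.
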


\begin{proposition}
The joint vector process $(A(u_{1},...,u_{p_{1}};t),\mathcal{L(}%
v_{1},...,v_{p_{2}};t\mathcal{))}$ is fractionally cointegrated of order $%
p_{1}+p_{2}-1$, for $d_{0}>2d_{\ast}-\frac{1}{2};$ of order $p_{1}+p_{2}-2,$
for $d_{0}<2d_{\ast}-\frac{1}{2};$ and of order $p_{1}+p_{2}-3$ for $%
d_{0}=2d_{\ast}-\frac{1}{2}.$ The cointegrating space is spanned by the
vectors which are orthogonal to the matrices (respectively)%
\begin{equation*}
\left(
\begin{array}{cccccc}
\phi (u_{1}) & ... & \phi (u_{p_{1}}) & v_{1}\phi (v_{1}) & ... &
v_{p_{2}}\phi (v_{p_{2}})%
\end{array}%
\right) \,,
\end{equation*}%
\begin{equation*}
\left(
\begin{array}{cccccc}
(u_{1}^{2}-1)\phi (u_{1}) & ... & (u_{p_{1}}^{2}-1)\phi (u_{p_{1}}) & \phi
(v_{1}) & ... & \phi (v_{p_{2}}) \\
(u_{1}^{2}-1)\phi (u_{1}) & ... & (u_{1}^{2}-1)\phi (u_{1}) &
(v_{1}^{2}-1)\phi (v_{1}) & ... & (v_{p_{2}}^{2}-1)\phi (v_{p_{2}})%
\end{array}%
\right) \,,
\end{equation*}%
\begin{equation*}
\left(
\begin{array}{cccccc}
\phi (u_{1}) & ... & \phi (u_{p_{1}}) & v_{1}\phi (v_{1}) & ... &
v_{p_{2}}\phi (v_{p_{2}}) \\
(u_{1}^{2}-1)\phi (u_{1}) & ... & (u_{p_{1}}^{2}-1)\phi (u_{p_{1}}) & \phi
(v_{1}) & ... & \phi (v_{p_{2}}) \\
(u_{1}^{2}-1)\phi (u_{1}) & ... & (u_{1}^{2}-1)\phi (u_{1}) &
(v_{1}^{2}-1)\phi (v_{1}) & ... & (v_{p_{2}}^{2}-1)\phi (v_{p_{2}})%
\end{array}%
\right) \,.
\end{equation*}
\end{proposition}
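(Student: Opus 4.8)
The plan is to run the argument of Propositions~\ref{prop1} and~\ref{prop2} for the stacked process $X(t):=(A(u_1,\dots,u_{p_1};t),\mathcal L(v_1,\dots,v_{p_2};t))$: I would isolate the finite-dimensional \emph{deterministic} subspace $V\subseteq\mathbb R^{p_1+p_2}$ that carries the chaotic component(s) of \emph{longest} memory of $X$, show that a vector $\gamma$ is cointegrating if and only if $\gamma\perp V$, and conclude that the cointegrating rank equals $p_1+p_2-\dim V$ with cointegrating space $V^{\perp}$. According to whether $d_0>2d_\ast-\tfrac12$, $d_0<2d_\ast-\tfrac12$, or $d_0=2d_\ast-\tfrac12$, one gets $\dim V=1,2,3$, hence the three ranks $p_1+p_2-1$, $p_1+p_2-2$, $p_1+p_2-3$.

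First I would record the structure of the low chaoses from \eqref{eq:chaotic-exp-A}--\eqref{eq:chaotic-exp-L}. The first chaos of each coordinate is a functional-dependent constant times $\phi(u)a_{00}(t)$ (areas) or $v\phi(v)a_{00}(t)$ (lengths), so the first chaos of $X(t)$ is $a_{00}(t)\sim I(d_0)$ times a single fixed vector $L_0$, blockwise proportional to $(\phi(u_1),\dots,\phi(u_{p_1}),v_1\phi(v_1),\dots,v_{p_2}\phi(v_{p_2}))$. For the second chaos, the structural fact I would extract is that $A(u,t)[2]$ is a scalar multiple of $u\phi(u)\,\mathcal W(t)$ and $\mathcal L(v,t)[2]$ is a scalar multiple of $\phi(v)\big((v^2-1)\mathcal W(t)+\mathcal W'(t)\big)$, where $\mathcal W(t)=\sum_\ell(2\ell+1)\{\widehat C_\ell(t)-C_\ell\}=\int_{\mathbb S^2}H_2(Z(x,t))\,dx$ and $\mathcal W'(t)=\sum_\ell(2\ell+1)(\lambda_\ell/2\sigma_1^2)\{\widehat C_\ell(t)-C_\ell\}$ — the \emph{same} random variable $\mathcal W(t)$ driving both. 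Consequently the second chaos of $X(t)$ equals $\mathcal W(t)\,d_{\mathcal W}+\mathcal W'(t)\,d_{\mathcal W'}$, with $d_{\mathcal W}$ blockwise proportional to $u_i\phi(u_i)$ (area block) and $(v_j^2-1)\phi(v_j)$ (length block) and $d_{\mathcal W'}$ equal to $0$ on the area block and proportional to $\phi(v_j)$ on the length block; moreover $\mathcal W,\mathcal W'\sim I(2d_\ast-\tfrac12)$ are non-proportional second-chaos variables, while $a_{00}(t)$ lies in the first chaos, so $a_{00},\mathcal W,\mathcal W'$ are linearly independent. Finally, by \eqref{int-order-q} (see Section~\ref{sec:memory}) every chaotic component of order $q\ge3$ has memory $I(q\widetilde d_\ast-\tfrac{q-1}{2})$, strictly shorter than both $I(d_0)$ and $I(2d_\ast-\tfrac12)$ in the three cases below.

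Then I would dispose of the three cases. If $d_0>2d_\ast-\tfrac12$, each coordinate is $I(d_0)$ with the memory carried solely by the first chaos, so $\gamma'X$ has strictly shorter memory iff $\gamma'L_0=0$: one (generically non-trivial) linear constraint, rank $p_1+p_2-1$, cointegrating space $L_0^{\perp}$, i.e.\ the orthocomplement of the first displayed matrix. If $d_0<2d_\ast-\tfrac12$, the dominant memory $I(2d_\ast-\tfrac12)$ lives only in the second chaos, and since $\mathcal W\not\propto\mathcal W'$ one needs $\gamma'd_{\mathcal W}=\gamma'd_{\mathcal W'}=0$: two independent constraints, rank $p_1+p_2-2$, cointegrating space $\mathrm{span}\{d_{\mathcal W},d_{\mathcal W'}\}^{\perp}$, i.e.\ the orthocomplement of the second displayed matrix. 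If $d_0=2d_\ast-\tfrac12$, the first chaos ($I(d_0)$) and the second chaos ($I(2d_\ast-\tfrac12)$) have the same memory, so $\gamma'X$ has strictly shorter memory iff it annihilates all three, $\gamma'L_0=\gamma'd_{\mathcal W}=\gamma'd_{\mathcal W'}=0$; since $a_{00},\mathcal W,\mathcal W'$ are linearly independent and (for distinct non-zero thresholds) $L_0,d_{\mathcal W},d_{\mathcal W'}$ are linearly independent vectors, this is three independent constraints, rank $p_1+p_2-3$, cointegrating space the orthocomplement of the third displayed matrix. The converse direction is the same one-liner as in Propositions~\ref{prop1}--\ref{prop2}: if $\gamma$ is not in the claimed complement the dominant-chaos term survives as a non-zero random variable of that very memory, so $\gamma$ is not cointegrating; if it is, the residual is a finite sum of strictly shorter-memory chaoses.

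The only step that is not pure bookkeeping is the \emph{dimension of the second-chaos loading space}: one has to recognise that the area and the length second chaoses are assembled from a \emph{common} second-chaos random variable, namely $\int_{\mathbb S^2}H_2(Z(x,t))\,dx$, so that together they occupy a two-dimensional (not three-dimensional) deterministic subspace — this is precisely what makes the middle-regime rank $p_1+p_2-2$ rather than $p_1+p_2-3$. The remaining ingredients are routine and already implicit in Propositions~\ref{prop1}--\ref{prop2}: non-proportionality of $\mathcal W$ and $\mathcal W'$ (because $\lambda_\ell=\ell(\ell+1)$ is non-constant over $\{\ell\ge1:C_\ell(0)\ne0\}$, together with the $\ell=0$ term in $\mathcal W$), linear independence of $a_{00},\mathcal W,\mathcal W'$ across chaoses, full row rank of the three displayed matrices for distinct non-zero thresholds, and — exactly as there — the proviso that the asymptotically monochromatic case (where a cancellation at a special level may raise the rank further) is left aside.
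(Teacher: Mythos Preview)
Your proposal is correct and follows exactly the route the paper indicates: the paper omits the proof entirely, stating only that it ``follows similar steps'' to Propositions~\ref{prop1} and~\ref{prop2}, and you carry out precisely those steps for the stacked vector. The one point you make explicit that the paper leaves implicit---that the joint second-chaos loading space is two-dimensional because the area block loads on $\mathcal W(t)=\int_{\mathbb S^2}H_2(Z(x,t))\,dx$ alone while the length block loads on the pair $\mathcal W,\mathcal W'$---is exactly the missing ingredient needed to get rank $p_1+p_2-2$ rather than $p_1+p_2-3$ in the middle regime, and is consistent with the separate ranks in Propositions~\ref{prop1} and the length proposition.
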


\subsection{The averaged periodogram and a conjecture}

In this section, we present the averaged periodogram estimator introduced by
Robinson in 1994, see \cite{Robinson1994}, and then we present a conjecture
about the extension of those results to Hermite polynomial transforms of
stationary long memory Gaussian processes.\newline

Consider a centered process $\{X_{t}\}_{t\in \mathbb{Z}}$ whose covariance
function is given by $\mathbb{E}[X_{t}X_{s}]=\rho (t-s)$ and spectral
density $f_{X}(\lambda )=\frac{1}{2\pi }\sum_{\tau =-\infty }^{\infty }\rho
(\tau )\cos (\tau \lambda ).$ Let us recall the spectral distribution
function
\begin{equation*}
F_{X}(\lambda )=\int_{0}^{\lambda }f(\theta )d\theta
\end{equation*}%
and the averaged periodogram
\begin{equation*}
\widehat{F}_{X}(\lambda )=\frac{2\pi }{T}\sum_{j=1}^{[T\lambda /2\pi
]}I_{X}(\lambda _{j})
\end{equation*}%
where $[\cdot ]$ denotes the integer part, $\lambda _{j}:=2\pi j/T$, and we
have introduced also the sample periodogram, given by
\begin{equation*}
I_{X}(\lambda )=|w_{X}(\lambda )|^{2}\qquad w_{X}(\lambda )=\frac{1}{\sqrt{%
2\pi T}}\sum_{t=1}^{T}X_{t}\exp (-i\lambda t)\text{ .}
\end{equation*}%
Now we need to introduce a few regularity conditions, which follow \cite%
{Robinson1994}.

\begin{itemize}
\item \textbf{Condition A.} (\emph{Long Memory Behaviour}) For some $d\in
\left( 0,\frac{1}{2}\right) $,
\begin{equation*}
f_{X}(\lambda )\simeq L\left( \frac{1}{\lambda }\right) \lambda ^{-2d},
\end{equation*}%
where $L(\lambda )$ is a slowly varying function at infinity, see \cite{BGT}.

\item \textbf{Condition B.} (\emph{Bandwidth Condition}) $\frac{1}{m}+\frac{m%
}{T}\rightarrow 0$ as $T\rightarrow \infty $.

\item \textbf{Condition C.} \ (\emph{Linearity}) The process $%
\{X_{t}\}_{t\in \mathbb{Z}}$ has the following Wold representation
\begin{equation*}
X_{t}=\sum_{j=0}^{\infty }\alpha _{j}e_{t-j}\,,\qquad \sum_{j=0}^{\infty
}\alpha _{j}^{2}<\infty
\end{equation*}%
where

\begin{itemize}
\item[$(i)$] $\mathbb{E}[e_{t}e_{s}]=0\,, \quad t>s$;

\item[$(ii)$] $\mathbb{E}[e_{u}e_{v}e_{t}e_{s}]=%
\begin{cases}
\sigma^{4} & \quad t=s>u=v \\
0 & \quad \text{otherwise}%
\end{cases}%
;$

\item[$(iii)$] There exists a nonnegative random variable $e$ such that, for
all $\varepsilon>0$ and some $K<1$,
\begin{equation*}
\mathbb{E }[e^{2}] <\infty \qquad \mathbb{P}(|e_{t}|>\varepsilon)\le K\,
\mathbb{P}(|e|>\varepsilon);
\end{equation*}

\item[$(iv)$] $\displaystyle\frac{1}{T}\sum_{t=1}^{T}\mathbb{E}\left[
e_{t}^{2}|e_{s}^{2}\,,\,\,s<t\right] \rightarrow _{p}\sigma ^{2}\quad \text{%
as}\quad T\rightarrow \infty ;$
\end{itemize}
\end{itemize}

We are in the position to recall the following result, which was given in
\cite[Theorem 1]{Robinson1994}:\newline

\begin{theorem}
(See \cite{Robinson1994}) Let Conditions A, B and C hold. Then
\begin{equation*}
\frac{\widehat{F}_{X}(\lambda _{m})}{F_{X}(\lambda _{m})}\rightarrow
_{p}1\qquad \text{as}\quad T\rightarrow \infty .
\end{equation*}
\end{theorem}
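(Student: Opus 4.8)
The plan is to upgrade the claimed convergence in probability to $L^{2}$-convergence of the ratio: it suffices to show that $\mathbb{E}[\widehat{F}_X(\lambda_m)]/F_X(\lambda_m)\to 1$ and $\operatorname{Var}(\widehat{F}_X(\lambda_m))/F_X(\lambda_m)^{2}\to 0$, and then Chebyshev's inequality closes the argument. A preliminary step pins down the order of the denominator: Condition B forces $\lambda_m=2\pi m/T\to 0$, and Condition A together with Karamata's theorem for integrals of regularly varying functions gives $F_X(\lambda_m)=\int_0^{\lambda_m}f_X\simeq\frac{1}{1-2d}L(1/\lambda_m)\lambda_m^{1-2d}$. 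Every error term below must be shown to be $o$ of this reference quantity (or of its square).

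For the mean I would use the Fej\'er-kernel identity $\mathbb{E}[I_X(\lambda_j)]=\int_{-\pi}^{\pi}f_X(\theta)\Phi_T(\lambda_j-\theta)\,d\theta$, so that $\mathbb{E}[\widehat{F}_X(\lambda_m)]=\int_{-\pi}^{\pi}f_X(\theta)\bigl(\tfrac{2\pi}{T}\sum_{j=1}^{m}\Phi_T(\lambda_j-\theta)\bigr)d\theta$. The inner weight is essentially $\mathbf{1}_{[0,\lambda_m]}(\theta)$ up to a boundary/leakage error controlled by the bound $\Phi_T(\mu)=O(\min(T,(T\mu^{2})^{-1}))$. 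Splitting the $\theta$-integral into a shrinking neighbourhood of the pole at $0$, the bulk $[0,\lambda_m]$, and the remaining tail, a Riemann-sum comparison on the bulk combined with the slow variation of $L$ produces $(1+o(1))F_X(\lambda_m)$; the contribution near $0$ is handled using the integrability of $\lambda^{-2d}L(1/\lambda)$ at the origin (valid since $2d<1$) rather than a crude sup bound, and the tail is $o(F_X(\lambda_m))$ by Condition B. This gives the mean statement.

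The variance is the crux, and it is where Condition C is used. The idea is to transfer the problem to the innovations: writing $\alpha(\lambda)=\sum_{k\ge 0}\alpha_k e^{-ik\lambda}$, so that $|\alpha(\lambda)|^{2}=2\pi f_X(\lambda)/\sigma^{2}$, one has $w_X(\lambda_j)=\alpha(\lambda_j)w_e(\lambda_j)+r_T(\lambda_j)$, where $r_T$ collects the edge effects from truncating the Wold sum. The key lemma --- and the step I expect to be the main obstacle --- is a uniform bound showing that $\mathbb{E}|r_T(\lambda_j)|^{2}$ is of smaller order than $f_X(\lambda_j)$ for all $j\le m$; this is proved by summation by parts exploiting the regular variation (hence near-monotonicity and summability) of the coefficients $\alpha_k$ implied by Condition A, and it is genuinely delicate at the low frequencies $j=O(1)$, where $f_X$ blows up. Granting it, $\operatorname{Cov}(I_X(\lambda_j),I_X(\lambda_k))$ equals, to leading order, $|\alpha(\lambda_j)|^{2}|\alpha(\lambda_k)|^{2}\operatorname{Cov}(I_e(\lambda_j),I_e(\lambda_k))$, and hypotheses $(i)$--$(iv)$ of Condition C are precisely the martingale-difference/fourth-moment conditions needed to conclude (in the spirit of the classical Brillinger--Hannan periodogram estimates) that $\operatorname{Cov}(I_e(\lambda_j),I_e(\lambda_k))$ is $\sigma^{4}/(2\pi)^{2}$ on the diagonal and summably $o(1)$ off it. Substituting back, $\operatorname{Var}(\widehat{F}_X(\lambda_m))$ is dominated by $\tfrac{c}{T^{2}}\sum_{j=1}^{m}f_X(\lambda_j)^{2}$ plus off-diagonal and $r_T$-remainder terms; a final Karamata computation shows the diagonal is $o(F_X(\lambda_m)^{2})$ (of exact order $m^{-1}F_X(\lambda_m)^{2}$ when $4d<1$), and the remaining terms are of even smaller order.

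Combining the two halves yields $\mathbb{E}[(\widehat{F}_X(\lambda_m)/F_X(\lambda_m)-1)^{2}]\to 0$, hence convergence in probability, as claimed. In summary: Karamata/Fej\'er bookkeeping for the bias, the linear-process DFT approximation $w_X\approx\alpha\,w_e$ plus the Condition C moment structure for the variance, with the uniform control of that approximation at the lowest Fourier frequencies being the one technically serious point.
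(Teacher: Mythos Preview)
The paper does not prove this theorem; it is simply recalled from Robinson (1994), accompanied only by a one-sentence heuristic (``it can still be shown that the sum of the periodogram ordinates converges to the integral of the spectral density, in a neighbourhood of the origin''). There is therefore no proof in the paper to compare your attempt against.

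On the substance of your sketch: the overall architecture --- Karamata for the order of $F_X(\lambda_m)$, Fej\'er-kernel smoothing for the bias, and transfer to the innovation DFT $w_e$ via the Wold representation for the fluctuation --- is indeed Robinson's strategy. However, your opening move of upgrading to $L^{2}$-convergence of the ratio does not go through under Condition~C as stated. Part~(ii) constrains only fourth \emph{cross}-moments $\mathbb{E}[e_u e_v e_t e_s]$ with $t=s>u=v$ (giving zero in the remaining ordered cases), and part~(iii) only guarantees a uniform square-integrability via stochastic domination by a variable $e\in L^{2}$; neither implies $\mathbb{E}[e_t^{4}]<\infty$. Consequently $\operatorname{Var}(I_e(\lambda_j))$, which contains the diagonal term $(2\pi T)^{-2}\sum_{t}\mathbb{E}[e_t^{4}]$, need not even be finite, and the second-moment bound on $\widehat{F}_X(\lambda_m)$ that you outline can fail outright. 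Robinson's original argument handles this by truncating the innovations --- this is precisely the role of condition~C(iii) --- and establishes convergence in probability directly rather than through the variance. Your sketch omits that truncation step, and without it the variance half of the argument is not valid under the stated hypotheses.
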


Heuristically, the result can be illustrated as follows: it is classical in
time series analysis that a form of locally averaged periodogram converges
under regularity conditions to the spectral density. In the case of long
range dependence, these results can not even be formulated, as the spectral
density diverges at the origin; however, it can be still shown that the sum
of the periodogram ordinates converge to the integral of the spectral
density, in a neigbourhood of the origin.

\bigskip

Now consider our Gaussian isotropic and stationary sphere-cross-time random
field $\{Z(x,t),\ (x,t)\in \mathbb{S}^{2}\times \mathbb{Z}\}$ and assume
that
\begin{equation*}
\widetilde{d}_{\ast }\in \left( \frac{q-1}{2q},\frac{1}{2}\right) \qquad
\text{with}\quad \widetilde{d}_{\ast }=d_{\ast }\vee d_{0}\,.
\end{equation*}%
Denote by
\begin{equation*}
\mathcal{X}_{t}^{q}:=\int_{\mathbb{S}^{2}}H_{q}\left( Z(x,t)\right) dx
\end{equation*}%
and consider the corresponding averaged periodogram
\begin{equation*}
\widehat{F}_{q}(\lambda )=\frac{2\pi }{T}\sum_{j=1}^{[T\lambda /2\pi
]}I_{q}(\lambda _{j})
\end{equation*}%
where
\begin{equation*}
I_{q}(\lambda ):=|w_{q}(\lambda )|^{2}\qquad w_{q}(\lambda ):=\frac{1}{\sqrt{%
2\pi T}}\sum_{t=1}^{T}\mathcal{X}_{t}^{q}\exp (-i\lambda t)\text{ .}
\end{equation*}%
Using the diagram formula, (see \cite[Proposition 4.15]{MaPeCUP} and also
Section \ref{sec:memory}), we have immediately
\begin{align*}
\rho _{q}(\tau )& :=\mathbb{E}\left[ \mathcal{X}_{0}^{q}\mathcal{X}_{\tau
}^{q}\right] =q!\,\int_{\mathbb{S}^{2}}\int_{\mathbb{S}^{2}}\Gamma (\langle
x,y\rangle ,\tau )^{q}\,dx\,dy \\
& \simeq L_{q}(1+\tau )^{q(2d_{\ast }-1)}=L_{q}(1+\tau )^{2d_{q}-1}\text{ }
\\[5pt]
\text{with}\quad d_{q}& :=q\widetilde{d}_{\ast }-\frac{q-1}{2} \\
\text{and}\quad L_{q}&=q! \int_{\mathbb{S}^{2}\times \mathbb{S}^{2}}\Bigg(%
\sum_{\ell:d_{\ell}=\widetilde{d}_{\ast}}
(2\ell+1)C_{\ell}(0)P_{\ell}(\langle x,y\rangle)\Bigg)^{q}dx\,dy\text{.}
\end{align*}%
Let us also recall the following Tauberian-type results (see \cite[Theorem
4.1.10]{BGT}). As usual, we say that a function is slowly-varying if and
only if
\begin{equation*}
\lim_{\ell \rightarrow \infty }\frac{L(c\ell )}{L(\ell )}=1\text{ , for all }%
c>0\text{ .}
\end{equation*}

\begin{theorem}
\label{thm:tauber} (See \cite{BGT}) Assume $L(\cdot )$ is slowly varying and
$\rho (\tau )\geq 0$ for $\tau $ large enough. Then we have that, if
\begin{equation*}
\lim_{\lambda \rightarrow 0}\frac{f(\lambda )}{\pi ^{-1}\Gamma
(2d)L(1/\lambda )\lambda ^{-2d}\sin \left( \frac{\pi (1-2d)}{2}\right) }=1
\end{equation*}%
then%
\begin{equation*}
\lim_{n\rightarrow \infty }\frac{2d}{n^{2d}L(n)}\sum_{\tau =0}^{n}\rho (\tau
)=1\text{ .}
\end{equation*}%
Moreover, assuming that
\begin{equation*}
\lim_{\eta \rightarrow 1}\liminf_{\tau \rightarrow \infty }\min_{\tau \leq
\tau ^{\prime }\leq \eta \tau }\frac{\rho (\tau ^{\prime })-\rho (\tau )}{%
\tau ^{2d-1}L(\tau )}\geq 0
\end{equation*}%
then%
\begin{equation*}
\lim_{\tau \rightarrow \infty }\frac{\rho (\tau )}{L(\tau )\tau ^{2d-1}}=1%
\text{ .}
\end{equation*}
\end{theorem}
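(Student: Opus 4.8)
Both assertions are Tauberian, and the natural engine is Karamata's Tauberian theorem together with a (quasi\nobreakdash-)monotone density theorem; the argument is essentially the one behind \cite[Theorem 4.1.10]{BGT}, which I sketch. A preliminary reduction: writing $f(\lambda)=\frac{1}{2\pi}\rho(0)+\frac{1}{\pi}\sum_{\tau\geq1}\rho(\tau)\cos(\tau\lambda)$ and using that (i) only finitely many $\rho(\tau)$ can be negative, so after subtracting a bounded correction we may take $\rho(\tau)\geq0$ for all $\tau$, and (ii) the constant $\rho(0)/2\pi$ is asymptotically negligible since $f(\lambda)\to\infty$ as $\lambda\downarrow0$, the first hypothesis becomes
\[
\sum_{\tau\geq1}\rho(\tau)\cos(\tau\lambda)\simeq\Gamma(2d)\cos(\pi d)\,L(1/\lambda)\,\lambda^{-2d}\qquad(\lambda\downarrow0),
\]
where I have used $\sin\!\big(\tfrac{\pi(1-2d)}{2}\big)=\cos(\pi d)$, strictly positive for $d\in(0,\tfrac12)$.

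\textbf{First assertion.} The plan is to transfer this cosine asymptotic to the Abel (Laplace/power\nobreakdash-series) transform $A(s):=\sum_{\tau\geq0}\rho(\tau)e^{-s\tau}$, $s\downarrow0$. The number $\Gamma(2d)\cos(\pi d)=\int_0^\infty y^{2d-1}\cos y\,dy$ is exactly the Abelian constant of the cosine transform of a sequence regularly varying of index $2d-1$, while $\Gamma(2d)=\int_0^\infty y^{2d-1}e^{-y}\,dy$ is that of the exponential transform; since $\rho\geq0$, the Fourier-series Tauberian machinery of \cite{BGT} turns the displayed asymptotic into $A(s)\simeq\Gamma(2d)\,L(1/s)\,s^{-2d}$. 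Karamata's Tauberian theorem --- with non-negativity of $\rho$ as the admissible side condition --- then inverts this to
\[
\sum_{\tau=0}^{n}\rho(\tau)\simeq\frac{\Gamma(2d)}{\Gamma(2d+1)}\,n^{2d}L(n)=\frac{1}{2d}\,n^{2d}L(n),
\]
which is the claim $\frac{2d}{n^{2d}L(n)}\sum_{\tau=0}^{n}\rho(\tau)\to1$; the Gamma factors combine so that only $1/(2d)$ survives.

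\textbf{Second assertion.} Here we upgrade the Ces\`aro asymptotic $U(n):=\sum_{\tau\leq n}\rho(\tau)\simeq\frac{1}{2d}n^{2d}L(n)$ to the pointwise statement $\rho(\tau)\simeq L(\tau)\tau^{2d-1}$. Since $\rho$ need not be monotone, the ordinary monotone density theorem does not apply, and the stated hypothesis
\[
\lim_{\eta\to1}\ \liminf_{\tau\to\infty}\ \min_{\tau\leq\tau'\leq\eta\tau}\ \frac{\rho(\tau')-\rho(\tau)}{\tau^{2d-1}L(\tau)}\ \geq\ 0
\]
is precisely the one-sided slow-oscillation (Tauberian) condition feeding a quasi-monotone density theorem. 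The mechanism is a squeeze over geometric blocks: for $\eta>1$, $U(\eta n)-U(n)\simeq\frac{1}{2d}(\eta^{2d}-1)n^{2d}L(n)$, so the block average $\frac{U(\eta n)-U(n)}{(\eta-1)n}\simeq\frac{\eta^{2d}-1}{2d(\eta-1)}\,n^{2d-1}L(n)$, which tends to $n^{2d-1}L(n)$ as $\eta\downarrow1$; comparing this average with $\min_{n<\tau'\leq\eta n}\rho(\tau')$ (bounded below via the hypothesis by $\rho(n)-\delta\,n^{2d-1}L(n)$ when $\eta$ is close to $1$ and $n$ is large) yields $\limsup_{n}\rho(n)/(n^{2d-1}L(n))\leq1$, and running the same estimate on the block immediately to the left of $n$ --- again using the one-sided condition to keep $\rho$ from dropping below its left-endpoint value --- gives the matching lower bound.

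\textbf{Main obstacle.} The Abelian/forward directions are routine dominated-convergence estimates; the content is entirely in the two side conditions. For the first assertion the delicate step is the passage from the cosine transform to the Abel transform for non-negative but non-monotone coefficients, which is the Fourier-series Tauberian theory and is where the input from \cite{BGT} is essential --- it is not reducible to Karamata alone. For the second assertion the delicate step is that the explicit $\liminf$ condition, and nothing stronger (in particular no two-sided bound on the increments), genuinely suffices for the quasi-monotone density theorem; proving that sufficiency is the crux.
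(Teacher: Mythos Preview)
The paper does not prove this theorem at all: it is quoted as a background result from \cite{BGT} (introduced in the text as ``Let us also recall the following Tauberian-type results (see \cite[Theorem 4.10.x]{BGT})'') and used without proof. So there is no in-paper argument to compare your proposal against.

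As a reconstruction of the BGT argument, your sketch is broadly on target: the first assertion is Karamata-type, the second is a monotone-density/slow-oscillation upgrade, and your block-averaging squeeze for the second part is exactly the standard mechanism. One comment on the first part: the detour through the Abel/Laplace transform $A(s)=\sum_\tau \rho(\tau)e^{-s\tau}$ is not how BGT actually proceeds. Their Fourier-series Tauberian theorems (Chapter~4, following the Zygmund--Boas line) relate the cosine-series asymptotic $\sum_\tau \rho(\tau)\cos(\tau\lambda)\sim C\,L(1/\lambda)\lambda^{-2d}$ \emph{directly} to the partial sums $\sum_{\tau\le n}\rho(\tau)\sim \tfrac{1}{2d}n^{2d}L(n)$, without an intermediate Laplace step. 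Your route is in principle workable, but the passage ``cosine transform $\Rightarrow$ Abel transform under nonnegativity alone'' is itself a Tauberian statement of comparable depth to the one you are trying to prove --- you flag this honestly as the delicate step, but it means your outline has shifted the difficulty rather than resolved it. If you want a self-contained sketch, it is cleaner to work directly with the cosine kernel as BGT does.
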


\begin{remark}
Note that the condition on the autocovariances is trivially satisfied if
they are monotonically decreasing. Corollary 4.10.2 in the same reference
implies that if $\rho (\tau )$ is monotonic then the converse statement is
also true.
\end{remark}

Using the previous Theorem \ref{thm:tauber}, we have that
\begin{equation*}
f_{q}(\lambda )=\frac{1}{2\pi }\sum_{\tau =-\infty }^{\infty }\rho _{q}(\tau
)e^{-i\tau \lambda }\simeq \pi ^{-1}\Gamma (2d_{q})L_{q}\lambda
^{-2d_{q}}\sin ({\pi (1-2d_{q})}/{2}) \lambda ^{-2d_{q}}
\end{equation*}%
and hence we define
\begin{equation*}
{F}_{q}(\lambda )= \pi ^{-1}\Gamma (2d_{q})L_{q}\lambda ^{-2d_{q}}\sin ({\pi
(1-2d_{q})}/{2}) \int_{0}^{\lambda _{m}}\lambda ^{-2d_{q}}d\lambda \text{ .}
\end{equation*}%
Note that, given the assumption $q>\left( 1-2\widetilde{d}_{\ast }\right)
^{-1}$, the process $\{\mathcal{X}_{t}^{q}\}_{t}$ is still long memory,
since this implies that $0<d_{q}<\frac{1}{2}\,$. However, it should be noted
that the \textquotedblleft Hermite\textquotedblright\ processes that we are
considering in this paper satisfy Condition C in \cite{Robinson1994} only
for $q=1$; otherwise, they are characterized by a nonlinear structure which
does not necessarily fulfill such a condition. Let us define%
\begin{equation*}
\widehat{F}_{q}(\lambda _{m})=\frac{2\pi }{T}\sum_{j=1}^{[T\lambda _{m}/2\pi
]}I_{q}(\lambda _{j})=\frac{1}{T^{2}}\sum_{j=1}^{m}\sum_{t,s=1}^{T}\mathcal{X%
}_{t}^{q}\mathcal{X}_{s}^{q}e^{-i\lambda _{j}(t-s)}
\end{equation*}%
We believe that a result analogous to Theorem 1 in \cite{Robinson1994} holds
in this generalized setting as well; in particular, we propose the following

\begin{conjecture}
\label{conj}Under Condition B and the previous assumptions and notations on $%
Z(x,t)$, we have that
\begin{equation*}
\frac{\widehat{F}_{q}(\lambda _{m})}{F_{q}(\lambda _{m})}\rightarrow _{p}1\,.
\end{equation*}
\end{conjecture}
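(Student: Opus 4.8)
The natural route is the classical second-moment argument. Since $\widehat F_q(\lambda_m)$ and $F_q(\lambda_m)$ are nonnegative, Chebyshev's inequality reduces the conjecture to the two estimates
\begin{equation*}
\frac{\mathbb{E}[\widehat F_q(\lambda_m)]}{F_q(\lambda_m)}\longrightarrow 1\,,\qquad\qquad \frac{\mathrm{Var}\big(\widehat F_q(\lambda_m)\big)}{F_q(\lambda_m)^{2}}\longrightarrow 0\,,
\end{equation*}
as $T\to\infty$, where $F_q(\lambda_m)\approx\lambda_m^{1-2d_q}\approx(m/T)^{1-2d_q}$ by direct integration and Condition B guarantees $m\to\infty$ and $\lambda_m\to 0$. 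So everything boils down to a bias estimate and a variance estimate.

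For the bias, recall
\begin{equation*}
\mathbb{E}[\widehat F_q(\lambda_m)]=\frac{1}{T^{2}}\sum_{j=1}^{m}\sum_{t,s=1}^{T}\rho_q(t-s)\,e^{-i\lambda_j(t-s)}\,,
\end{equation*}
which depends only on the second-order structure of $\{\mathcal X_t^q\}$. We have already established, via the diagram formula (\cite[Proposition 4.15]{MaPeCUP}) and the Tauberian Theorem \ref{thm:tauber}, that $\rho_q(\tau)\simeq L_q(1+\tau)^{2d_q-1}$ with $0<d_q<\tfrac12$ and $L_q>0$, so that $f_q$ satisfies Robinson's Condition~A with memory parameter $d_q$. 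Hence $\mathbb{E}[\widehat F_q(\lambda_m)]\simeq F_q(\lambda_m)$ follows from the Tauberian/Fej\'er-kernel argument for the expectation of the averaged periodogram, which is a functional of $\rho_q$ alone and therefore does not invoke the linearity Condition~C; this reproduces the expectation part of \cite[Theorem 1]{Robinson1994}. This is the routine half.

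The substantive step is the variance, and here Robinson's proof cannot be copied: for $q\ge 2$ the Hermite-transformed field $\mathcal X^q$ lives in the $q$-th Wiener chaos and violates the linear structure of Condition~C. The plan is to exploit the chaos structure directly. Writing $\mathcal X_t^q=I_q(h_t)$ for suitable symmetric kernels $h_t$ and applying the multiplication formula for multiple Wiener--It\^o integrals, $\mathcal X_t^q\mathcal X_s^q$ decomposes into chaoses of even order $\le 2q$; expanding $\widehat F_q(\lambda_m)^{2}$ and subtracting $(\mathbb{E}\widehat F_q(\lambda_m))^{2}$ gives
\begin{equation*}
\mathrm{Var}\big(\widehat F_q(\lambda_m)\big)=\frac{1}{T^{4}}\sum_{j,j'=1}^{m}\sum_{t,s,t',s'=1}^{T}e^{-i\lambda_j(t-s)}e^{-i\lambda_{j'}(t'-s')}\,\mathrm{Cov}\big(\mathcal X_t^q\mathcal X_s^q,\,\mathcal X_{t'}^q\mathcal X_{s'}^q\big)\,,
\end{equation*}
and one expands this covariance by the diagram formula. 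Each resulting term is an integral over $(\mathbb S^{2})^{4}$ of a product of powers of $\Gamma(\langle\cdot,\cdot\rangle,\cdot)$; integrating out the four sphere variables with the addition theorem for Legendre polynomials collapses it into a product of time-covariance functions, each with a power-law decay governed by an exponent of the type in \eqref{int-order-q}. The two ``principal'' pairings reproduce $\rho_q(t-t')\rho_q(s-s')+\rho_q(t-s')\rho_q(s-t')$ --- precisely the contribution that would survive if $\mathcal X^q$ were Gaussian, and precisely the piece Robinson controls without Condition~C --- so it contributes $o(F_q(\lambda_m)^{2})$. Every remaining (``connected'') diagram carries either at least three time-covariance factors among the four indices or factors with strictly smaller memory exponents, since the spatial trace lowers the order of integration; paired with the Dirichlet kernels $D_m(\tau):=\sum_{j=1}^m e^{-i\lambda_j\tau}$ and the bounds $|D_m(\tau)|\le\min\big(m,\,c/\|\tau\|_T\big)$, with $\|\tau\|_T$ the distance from $\tau$ to the nearest multiple of $T$, a power-counting estimate on $\tfrac{1}{T^{4}}\sum_{t,s,t',s'}|D_m(t-s)|\,|D_m(t'-s')|\,(\cdots)$ shows these terms are $o\big((m/T)^{2-4d_q}\big)=o(F_q(\lambda_m)^{2})$; the memory assumption $\widetilde d_\ast\in\big(\tfrac{q-1}{2q},\tfrac12\big)$, i.e.\ $q<(1-2\widetilde d_\ast)^{-1}$ and $d_q\in(0,1/2)$, is exactly what keeps these cross-diagrams summable enough.

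The main obstacle is this last estimate: organizing the full diagram expansion of the fourth-order moment of the sphere-integrated Hermite process, carrying out the spatial integrations to read off the exact decay exponent of each diagram, and checking that after multiplication by the two Dirichlet kernels and summation over $[1,T]^{4}\times[1,m]^{2}$ every non-principal diagram is of strictly smaller order than $F_q(\lambda_m)^{2}$. An alternative would be to verify a Condition~C--type fourth-cumulant bound for $\mathcal X^q$ directly, but that appears considerably harder than the chaos-expansion route. Once the bias and variance estimates are in place, Chebyshev's inequality yields $\widehat F_q(\lambda_m)/F_q(\lambda_m)\rightarrow_{p}1$.
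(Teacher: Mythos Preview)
The paper does not prove this statement: it is explicitly labelled a \emph{Conjecture} and the authors write that ``a full investigation of this argument however seems demanding and it is beyond the purpose of this paper; \dots\ we leave them for further research.'' The only thing the paper offers is a one-paragraph heuristic: decompose the second moment of $\widehat F_q(\lambda_m)/F_q(\lambda_m)$ into the squared expected value (which they expect to dominate) and a variance term (which they expect to be $o(1)$ via the diagram formula for cumulants of the Fourier transforms). That is precisely the strategy you sketch, and you have fleshed it out considerably further than the paper does --- in particular your separation of the principal (``Gaussian-like'') pairings from the connected diagrams, and the Dirichlet-kernel power-counting plan, go well beyond anything the paper states.

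So there is nothing to compare in terms of a completed argument. Your proposal is consistent with the authors' own suggested route, and you correctly identify the genuine obstacle: the combinatorial control of all non-principal diagrams in the fourth-moment expansion of the sphere-integrated $q$-th chaos, uniformly in $m,T$ under Condition~B. You should be clear, however, that what you have written is still a programme and not a proof --- the crucial estimate that every connected diagram is $o\big((m/T)^{2-4d_q}\big)$ is asserted by a power-counting heuristic rather than established, and that is exactly the step the authors themselves declined to carry out. Until that step is made rigorous (with explicit bookkeeping of the exponents coming from the spatial integrations and the Dirichlet-kernel sums), the conjecture remains open in your write-up just as it does in the paper.
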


We believe that Conjecture \ref{conj} can be proved along lines similar to
those in \cite{Robinson1994}. More precisely, the leading term in the second
moment of the previous ratio can be decomposed into two summands; the first
one corresponds to the squared expected value, which we expect to be
dominating, while the variance term can presumably be shown to be $o(1)$ by
a careful analysis of cumulants of the Fourier transforms, by means of the
diagram formula, see \cite[Proposition 4.15]{MaPeCUP}. A full investigation
of this argument however seems demanding and it is beyond the purpose of
this paper; as we believe these results would have independent interest, we
leave them for further research.

In the next section we explore a statistical estimator for a crucial
parameter in sphere-cross-time random fields, under the assumption that the
previous conjecture holds.

\subsection{Statistical applications: estimation of $\protect\sigma _{1}$}

In this section, we show how the existence of geometric fractional
cointegrating relationships can be exploited to produce very simple and
practical estimators for the most important parameter when studying the
excursion sets of Gaussian random fields. In particular, it is well-known
that, under isotropy, the expected value of the boundary length for
isotropic random fields, and more generally the expected value for all
Lipschitz-Killing curvatures (thus including also the Euler-Poincar\'{e}
characteristic, which we do not study in this note) can be expressed in
terms of a single free parameter: the so-called first spectral moment of the
field. The latter corresponds to the variance of the first derivative of the
field itself; \ for our purposes it is sufficient to recall that the
Kac-Rice formula gives (see e.g., \cite{AdlerTaylor}, \cite{AHL2024} )
\begin{equation*}
E[\mathcal{L(}u;t\mathcal{)]=\sigma }_{1}2\pi e^{-u^{2}/2},\qquad \sigma
_{1}^{2}=\sum_{\ell =0}^{\infty }\frac{2\ell +1}{4\pi }C_{\ell }\frac{\ell
(\ell +1)}{2}\,.
\end{equation*}%
A natural way to estimate $\sigma _{1}$ would then be to consider the
statistic
\begin{equation*}
\widehat{\sigma }_{1}:=\frac{1}{2\pi e^{-u^{2}/2}}\frac{1}{T}\sum_{t=1}^{T}%
\mathcal{L(}u;t\mathcal{)}
\end{equation*}%
which is clearly unbiased, with asymptotic variance of order%
\begin{equation*}
\text{Var}(\widehat{\sigma }_{1})=\frac{1}{4\pi ^{2}e^{-u^{2}}}\frac{1}{T}%
\sum_{\tau =1-T}^{T-1}\left( 1-\frac{|\tau|}{T}\right) \text{Cov}(\mathcal{L(%
}u;0\mathcal{)},\mathcal{L(}u;\tau \mathcal{))}\approx
T^{2d_{0}-1}+T^{4d_{\ast }-2}.
\end{equation*}%
However, this estimator need not be very efficient, in fact the previous
rates of convergence can become arbitrarily slow when $d_{0}$ or $d_{\ast }$
are close to $\frac{1}{2}$. Our idea is to introduce an alternative
estimator which, under Conjecture \ref{conj}, can lead to faster consistency
rates.

\subsubsection{A consistent estimator under Conjecture \protect\ref{conj}}

Now we must make the following distinction:

\begin{itemize}
\item \textbf{case a}. $d_0>2d_{\ast}-\frac12$;

\item \textbf{case b}. $d_0<2d_{\ast}-\frac12$ and there exists a unique $%
\ell=\ell^\ast$ such that $d_{\ell}=d_{\ast}$.
\end{itemize}

Note that in \textbf{case a} the residuals are dominated by the second
chaos, whereas in \textbf{case b} by the first. Let us indeed recall the
chaotic expansions \eqref{eq:chaotic-exp-A} and \eqref{eq:chaotic-exp-L}. In
\textbf{case a} we rewrite them as follows:
\begin{align*}
A(u;t)-\mathbb{E}[A(u;t)]& =\frac{1}{\sqrt{4\pi }}\phi
(u)a_{00}(t)+R_{A}(u,t)\,, \\
\mathcal{L}(u;t)-\mathbb{E}[\mathcal{L}(u;t)]& =\sigma _{1}\sqrt{2}\pi u\phi
(u)a_{00}(t)+R_{\mathcal{L}}(u;t)\,,
\end{align*}%
where $R_{A}(u,t):=\sum_{q=2}^{\infty }{A}(u,t;q)$ and $R_{\mathcal{L}%
}(u;t)=\sum_{q=2}^{\infty }\mathcal{L}(u,t)[q]$.

Then, we obtain
\begin{align*}
\mathcal{L(}u;t\mathcal{)-}\mathbb{E}[\mathcal{L(}u;t\mathcal{)]} &=\beta
(u,\sigma _{1})\left\{ A(u,t)-\mathbb{E}[A(u,t)]\right\} +R_{\mathcal{L}%
}(u;t)-\beta (u,\sigma _{1}) \,R_{A}(u,t)
\end{align*}%
where%
\begin{align*}
\beta=\beta (u,\sigma _{1}) &:=\sigma _{1}\sqrt{8}\pi ^{3/2}u\,,
\end{align*}

For brevity sake, now denote $X^u_t:=A(u;t)-\mathbb{E}[A(u;t)]$, $Y^u_t:=%
\mathcal{L(}u;t\mathcal{)-}\mathbb{E}[\mathcal{L(}u;t\mathcal{)]}$ and $%
\varepsilon_t^u :=R_{\mathcal{L}}(u;t)-\beta \,R_{A}(u,t)$. We have hence
achieved the following regression form
\begin{equation*}
Y^u_t=\beta X^u_t + \varepsilon_t^u\,,
\end{equation*}
with two main problems:

\begin{itemize}
\item both the dependent variable and the regressor depend on an unknown
expected value, which is itself function of the unknown parameter $\sigma
_{1}$;

\item the regressor and dependent variable are correlated.
\end{itemize}

Both problems are solved resorting to Narrow-Band Least Squares. Consider
indeed the Discrete Fourier Transforms%
\begin{eqnarray*}
w_{Y_t^u}(\lambda _{j}) &=&\frac{1}{\sqrt{2\pi T}}\sum_{t=1}^{T}Y_t^u\exp
(-i\lambda _{j}t)\,, \\
w_{X_t^u}(\lambda _{j}) &=&\frac{1}{\sqrt{2\pi T}}\sum_{t=1}^{T}X_t^u\exp
(-i\lambda _{j}t)\,,
\end{eqnarray*}%
and introduce the narrow-band least square estimator
\begin{equation*}
\widehat{\beta}:=\frac{\sum_{j=1}^{m}w_{X_t^u}(\lambda _{j})\overline{w}%
_{Y_t^u}(\lambda _{j})}{\sum_{j=1}^{m}w_{X_t^u}(\lambda _{j})\overline{w}%
_{X_t^u}(\lambda _{j})}\,.
\end{equation*}
which implies that a narrow-band least square estimator for $\widehat{\sigma}%
_{1}$ is given by
\begin{equation*}
\widehat{\sigma}_{1}(u):=\frac{1}{\sqrt{8}\pi^{3/2} u}\widehat{\beta}\,.
\end{equation*}

Assuming that Conjecture \ref{conj} holds, we have the following

\begin{proposition}
\label{prop:consistency-case-a} In $\text{\textbf{case a}}$, that is $%
d_0>2d_{\ast}-\frac12$, we have, as $\frac1m+\frac mT\rightarrow 0$, that
the estimator $\widehat{\sigma}_{1}$ is consistent and
\begin{equation*}
|\widehat{\sigma}_{1}-\sigma _{1}|=O_{p}\left(\left(\frac
mT\right)^{d_{0}-(2d_\ast-1/2)}\right)\,.
\end{equation*}
\end{proposition}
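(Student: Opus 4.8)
The plan is to reduce the claim to a convergence rate for $|\widehat\beta-\beta|$, since $\widehat\sigma_1(u)=\widehat\beta/(\sqrt8\,\pi^{3/2}u)$ and $\sigma_1=\beta/(\sqrt8\,\pi^{3/2}u)$, so that $|\widehat\sigma_1-\sigma_1|=|\widehat\beta-\beta|/(\sqrt8\,\pi^{3/2}|u|)$. First I would substitute the regression identity $Y^u_t=\beta X^u_t+\varepsilon^u_t$ into the numerator of $\widehat\beta$: by linearity of the Fourier transform, $w_{Y^u}(\lambda_j)=\beta\,w_{X^u}(\lambda_j)+w_{\varepsilon^u}(\lambda_j)$ (recall $\beta$ is real), whence
\begin{equation*}
\widehat\beta-\beta=\frac{\sum_{j=1}^m w_{X^u}(\lambda_j)\,\overline{w}_{\varepsilon^u}(\lambda_j)}{\sum_{j=1}^m w_{X^u}(\lambda_j)\,\overline{w}_{X^u}(\lambda_j)}=\frac{\sum_{j=1}^m w_{X^u}(\lambda_j)\,\overline{w}_{\varepsilon^u}(\lambda_j)}{\sum_{j=1}^m I_{X^u}(\lambda_j)}\,.
\end{equation*}
A key observation here is that the unknown constants $\mathbb E[A(u;t)]$ and $\mathbb E[\mathcal L(u;t)]$ do not enter any $w(\lambda_j)$ with $1\le j\le m<T$, because $\sum_{t=1}^T e^{-i\lambda_j t}=0$ at the Fourier frequencies $\lambda_j=2\pi j/T$. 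This disposes of the first of the two ``main problems'' (dependence of the variables on an unknown mean) automatically, and is precisely why one works in the frequency domain over $1\le j\le m$.

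Second, I would deal with the correlation between regressor and residual by a crude Cauchy--Schwarz bound on the modulus of the numerator,
\begin{equation*}
\Bigl|\sum_{j=1}^m w_{X^u}(\lambda_j)\,\overline{w}_{\varepsilon^u}(\lambda_j)\Bigr|\le\Bigl(\sum_{j=1}^m I_{X^u}(\lambda_j)\Bigr)^{1/2}\Bigl(\sum_{j=1}^m I_{\varepsilon^u}(\lambda_j)\Bigr)^{1/2}\,,
\end{equation*}
so that, writing $\widehat F_{X^u}$ and $\widehat F_{\varepsilon^u}$ for the averaged periodograms of $\{X^u_t\}$ and $\{\varepsilon^u_t\}$,
\begin{equation*}
|\widehat\beta-\beta|\le\Biggl(\frac{\sum_{j=1}^m I_{\varepsilon^u}(\lambda_j)}{\sum_{j=1}^m I_{X^u}(\lambda_j)}\Biggr)^{1/2}=\Biggl(\frac{\widehat F_{\varepsilon^u}(\lambda_m)}{\widehat F_{X^u}(\lambda_m)}\Biggr)^{1/2}\,.
\end{equation*}
This is exactly the narrow-band least squares argument from the fractional cointegration literature; one could alternatively take real parts throughout, but for the rate it suffices to bound the modulus.

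Third, I would identify the orders of the two averaged periodograms via Conjecture~\ref{conj}. In \textbf{case a} the low-frequency behaviour of $X^u_t=A(u;t)-\mathbb E[A(u;t)]$ is governed by its first chaos $\tfrac1{\sqrt{4\pi}}\phi(u)a_{00}(t)\sim I(d_0)$ (cf. Section~\ref{sec:memory}), so that $\widehat F_{X^u}(\lambda_m)/F_{X^u}(\lambda_m)\to_p1$ with $F_{X^u}(\lambda_m)\approx(m/T)^{1-2d_0}$ up to a slowly varying factor; meanwhile $\varepsilon^u_t=R_{\mathcal L}(u;t)-\beta R_A(u,t)$ has its first chaos cancelled, starts at the second chaos, hence $\varepsilon^u_t\sim I(2d_\ast-\tfrac12)$ and $F_{\varepsilon^u}(\lambda_m)\approx(m/T)^{2-4d_\ast}$ up to a slowly varying factor. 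Substituting,
\begin{equation*}
|\widehat\beta-\beta|=O_p\!\left(\Bigl((m/T)^{(2-4d_\ast)-(1-2d_0)}\Bigr)^{1/2}\right)=O_p\!\left((m/T)^{\,d_0-(2d_\ast-1/2)}\right)\,,
\end{equation*}
since $\tfrac12\bigl[(2-4d_\ast)-(1-2d_0)\bigr]=d_0-(2d_\ast-\tfrac12)$; this exponent is strictly positive exactly because we are in \textbf{case a}, so dividing by $\sqrt8\,\pi^{3/2}|u|$ yields both consistency and the stated rate.

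The main obstacle is the third step: one must justify that the averaged periodogram of a \emph{sum} of Hermite-rank chaotic components is, at narrow bandwidth $\lambda_m\to0$, controlled by its longest-memory summand, with the lower-rank pieces and their cross terms contributing only a lower-order remainder. This means applying Conjecture~\ref{conj} not to a single $\mathcal X^q_t$ but to the chaotic projections of $X^u_t$ and $\varepsilon^u_t$, controlling the cross terms through the diagram formula, and keeping track of the slowly varying factors $L_q$ in $F_{X^u}$ and $F_{\varepsilon^u}$ so that they cancel (or at worst remain asymptotically negligible) in the ratio $\widehat F_{\varepsilon^u}/\widehat F_{X^u}$. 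The algebraic identity, the Cauchy--Schwarz bound and the automatic elimination of the unknown means are all routine.
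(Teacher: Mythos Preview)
Your proposal is correct and follows essentially the same route as the paper: substitute the regression identity, apply Cauchy--Schwarz to bound $|\widehat\beta-\beta|$ by $\bigl(\widehat F_{\varepsilon^u}(\lambda_m)/\widehat F_{X^u}(\lambda_m)\bigr)^{1/2}$, then invoke Conjecture~\ref{conj} together with the orders of integration $d_{X^u}=d_0$ and $d_{\varepsilon^u}=2d_\ast-\tfrac12$ to obtain the rate. Your remark on the ``main obstacle'' (that Conjecture~\ref{conj} is formulated for a single Hermite process $\mathcal X^q_t$ whereas here one needs it for the full chaotic expansions of $X^u_t$ and $\varepsilon^u_t$) is a genuine subtlety that the paper's proof glosses over, and your explicit note that the unknown means drop out at the nonzero Fourier frequencies is likewise more careful than the paper.
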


\begin{proof}
We want to prove that $\widehat{\sigma }_{1}$ is consistent, that is $%
\widehat{\sigma }_{1}\rightarrow \sigma _{1}$ in probability, as $%
m,T\rightarrow \infty $.%
We have
\begin{align*}
\widehat{\beta }& =\frac{\sum_{j=1}^{m}w_{X_{t}^{u}}(\lambda _{j})(\beta
\overline{w}_{X_{t}^{u}}(\lambda _{j})+\overline{w}_{\varepsilon
_{t}^{u}}(\lambda _{j}))}{\sum_{j=1}^{m}|w_{X_{t}^{u}}(\lambda _{j})|^{2}} \\
& =\beta +\frac{\sum_{j=1}^{m}w_{X_{t}^{u}}(\lambda _{j})\overline{w}%
_{\varepsilon _{t}^{u}}(\lambda _{j})}{\sum_{j=1}^{m}|w_{X_{t}^{u}}|^{2}}
\end{align*}%
and hence, following \cite[\S 5.3]{Robinson1994}, we have
\begin{align*}
|\widehat{\beta }-\beta |& =\left\vert \frac{\sum_{j=1}^{m}w_{X_{t}^{u}}(%
\lambda _{j})\overline{w}_{\varepsilon _{t}^{u}}(\lambda _{j})}{%
\sum_{j=1}^{m}|w_{X_{t}^{u}}|^{2}}\right\vert =\left\vert \frac{%
\sum_{j=1}^{m}I_{X\varepsilon }(\lambda _{j})}{\sum_{j=1}^{m}I_{X}(\lambda
_{j})}\right\vert \\
& =\left\vert \frac{\widehat{F}_{X\varepsilon }(\lambda _{m})}{\widehat{F}%
_{X}(\lambda _{m})}\right\vert \leq \left\{ \frac{\widehat{F}_{\varepsilon
}(\lambda _{m})}{\widehat{F}_{X}(\lambda _{m})}\right\} ^{1/2}\,,
\end{align*}%
where the last inequality follows from an application of Cauchy-Schwarz
inequality, that is
\begin{align*}
\left\vert \frac{\widehat{F}_{X\varepsilon }(\lambda _{m})}{\widehat{F}%
_{X}(\lambda _{m})}\right\vert & =\sum_{j=1}^{m}\frac{w_{X}(\lambda _{j})}{%
\sqrt{\widehat{F}_{X}(\lambda _{m})}}\frac{\overline{w}_{\varepsilon
}(\lambda _{j})}{\sqrt{\widehat{F}_{X}(\lambda _{m})}} \\
& \leq \left( \sum_{j=1}^{m}\frac{w_{X}(\lambda _{j})\overline{w}%
_{X}(\lambda _{j})}{\widehat{F}_{X}(\lambda _{m})}\right) ^{1/2}\left(
\sum_{j=1}^{m}\frac{w_{\varepsilon }(\lambda _{j})\overline{w}_{\varepsilon
}(\lambda _{j})}{\widehat{F}_{X}(\lambda _{m})}\right) ^{1/2} \\
& =\left( \frac{\widehat{F}_{X}(\lambda _{m})}{\widehat{F}_{X}(\lambda _{m})}%
\right) ^{1/2}\left( \frac{\widehat{F}_{\varepsilon }(\lambda _{m})}{%
\widehat{F}_{X}(\lambda _{m})}\right) ^{1/2}=\left\{ \frac{\widehat{F}%
_{\varepsilon }(\lambda _{m})}{\widehat{F}_{X}(\lambda _{m})}\right\}
^{1/2}\,.
\end{align*}%
As a consequence, starting from the following inequality
\begin{equation*}
|\widehat{\beta }-\beta |\leq \left\{ \frac{\widehat{F}_{\varepsilon
}(\lambda _{m})}{\widehat{F}_{X}(\lambda _{m})}\right\} ^{1/2}=\left\{ \frac{%
\widehat{F}_{\varepsilon }(\lambda _{m})}{{F}_{\varepsilon }(\lambda _{m})}%
\frac{{F}_{X}(\lambda _{m})}{\widehat{F}_{X}(\lambda _{m})}\right\}
^{1/2}\left\{ \frac{F_{\varepsilon }(\lambda _{m})}{F_{X}(\lambda _{m})}%
\right\} ^{1/2}
\end{equation*}%
and using the fact that (Conjecture \ref{conj})
\begin{equation*}
\frac{\widehat{F}_{\varepsilon }(\lambda _{m})}{{F}_{\varepsilon }(\lambda
_{m})}\frac{{F}_{X}(\lambda _{m})}{\widehat{F}_{X}(\lambda _{m})}\rightarrow
_{p}1
\end{equation*}%
and
\begin{equation*}
\left\{ \frac{F_{\varepsilon }(\lambda _{m})}{F_{X}(\lambda _{m})}\right\}
^{1/2}=\left\{ \frac{\displaystyle\int_{0}^{\lambda _{m}}\lambda
^{-2d_{\varepsilon }}d\lambda }{\displaystyle\int_{0}^{\lambda _{m}}\lambda
^{-2d_{X^{u}}}d\lambda }\right\} ^{1/2}=\lambda
_{m}^{d_{X^{u}}-d_{\varepsilon }}=\left( \frac{2\pi \,m}{T}\right)
^{d_{X^{u}}-d_{\varepsilon }}
\end{equation*}%
where, by construction,
\begin{equation*}
\varepsilon _{t}\sim I\left( 2d_{\ast }-\frac{1}{2}\right) \qquad
X_{t}^{u}\sim I(d_{0})
\end{equation*}%
and hence $d_{\varepsilon }=2d_{\ast }-\frac{1}{2}$ while $d_{X^{u}}=d_{0}$,
we have
\begin{equation*}
|\widehat{\beta }-\beta |=O_{p}\left( \left( \frac{2\pi \,m}{T}\right)
^{d_{0}-(2d_{\ast }-1/2)}\right)
\end{equation*}%
which concludes the proof.
\end{proof}

\bigskip

In \textbf{case b}, we fix the level
\begin{equation*}
u=u^\ast:=\sqrt{2-\frac{\lambda_{\ell^\ast}/2}{\sigma_1^2}} \implies
(u^{2}-1)+\frac{\lambda _{\ell }/2}{\sigma _{1}^{2}}=1\,,
\end{equation*}
and we rewrite the two chaotic expansions \eqref{eq:chaotic-exp-A} and %
\eqref{eq:chaotic-exp-L} as follows:
\begin{align*}
&A(u^\ast;t)-E[A(u^\ast;t)]=u^\ast \phi (u^\ast)(2\ell^\ast+1)\left\{
\widehat C_{\ell^\ast }(t)-C_{\ell^\ast}\right\} + R_A(u^\ast,t)^{\prime } \\
&\mathcal{L}(u^\ast;t)-E[\mathcal{L}(u^\ast;t)]=\frac{\sigma _{1}}{2}\sqrt{%
\frac{\pi }{2}}\phi (u^\ast)(2\ell^\ast+1)\left\{ \widehat C_{\ell^\ast
}(t)-C_{\ell^\ast}\right\}+ R_{\mathcal{L}}(u^\ast,t)^{\prime }
\end{align*}
which implies
\begin{equation*}
\mathcal{L}(u^\ast;t)-E[\mathcal{L}(u^\ast;t)]=\frac{\sigma _{1}}{2}\sqrt{%
\frac{\pi }{2}}\frac{1}{u^\ast} \{A(u^\ast;t)-E[A(u^\ast;t)]\}+ R_{\mathcal{L%
}}(u^\ast,t)^{\prime }-\frac{1}{u^\ast}R_A(u^\ast,t)^{\prime }
\end{equation*}
We have hence achieved the following regression form
\begin{equation*}
Y^{u^\ast}_t=\alpha X^{u^\ast}_t + \eta_t^{u^\ast}\,,
\end{equation*}
where $\alpha=\alpha(\sigma_1)=\frac{\sigma _{1}}{2}\sqrt{\frac{\pi }{2}}%
\frac{1}{u^\ast}$ and $\eta_t^{u^\ast}=R_{\mathcal{L}}(u^\ast,t)^{\prime
}-\alpha R_A(u^\ast,t)^{\prime }$. Following what has been done for \textbf{%
case a}, we introduce the narrow-band least square estimator
\begin{equation*}
\widehat{\alpha}:=\frac{\sum_{j=1}^{m}w_{X_t^{u^\ast}}(\lambda _{j})%
\overline{w}_{Y_t^{u^\ast}}(\lambda _{j})}{\sum_{j=1}^{m}w_{X_t^{u^\ast}}(%
\lambda _{j})\overline{w}_{X_t^{u^\ast}}(\lambda _{j})}\,.
\end{equation*}
which implies that a narrow-band least square estimator for $\widehat{\sigma}%
_{1}$ is given by
\begin{equation*}
\widehat{\sigma}_{1}({u^\ast}):=2{u^\ast} \sqrt{\frac2\pi}\,\widehat{\alpha}%
\,.
\end{equation*}

\begin{proposition}
In $\text{\textbf{case b}}$, that is $d_0<2d_{\ast}-\frac12$ and there
exists a unique $\ell=\ell^\ast$ such that $d_{\ell}=d_{\ast}$, we have, as $%
T,m\rightarrow \infty $, that the estimator $\widehat{\sigma}_{1}$ is
consistent and
\begin{equation*}
\widehat{|\sigma}_{1}-\sigma _{1}|=O_{p}\left(\left(\frac
mT\right)^{(2d_{\ast}-1/2)-[d_{0}\vee (3d_{\ast}-1) \vee
(2d_{\ast\ast}-1/2)]}\right)\,.
\end{equation*}
\end{proposition}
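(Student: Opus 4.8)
The plan is to run the proof of Proposition~\ref{prop:consistency-case-a} \emph{mutatis mutandis}, replacing the triple $(X^{u},\varepsilon^{u},\beta)$ by $(X^{u^\ast},\eta^{u^\ast},\alpha)$, and to change only the final bookkeeping of integration orders. From the regression identity $Y^{u^\ast}_t=\alpha X^{u^\ast}_t+\eta^{u^\ast}_t$ and the definition of $\widehat\alpha$ one gets
\begin{equation*}
\widehat\alpha-\alpha=\frac{\sum_{j=1}^m w_{X^{u^\ast}_t}(\lambda_j)\,\overline w_{\eta^{u^\ast}_t}(\lambda_j)}{\sum_{j=1}^m|w_{X^{u^\ast}_t}(\lambda_j)|^2}=\frac{\widehat F_{X\eta}(\lambda_m)}{\widehat F_{X}(\lambda_m)}\,,
\end{equation*}
and the same Cauchy--Schwarz step as in \textbf{case a} gives $|\widehat\alpha-\alpha|\le\{\widehat F_{\eta}(\lambda_m)/\widehat F_{X}(\lambda_m)\}^{1/2}$; as in \textbf{case a}, the narrow-band construction at the Fourier frequencies $\lambda_j$, $j\ge1$, absorbs both the dependence on the unknown means and the regressor--error correlation. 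Since $\widehat\sigma_1(u^\ast)=2u^\ast\sqrt{2/\pi}\,\widehat\alpha$, consistency and the stated rate will follow once this ratio is bounded.

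Next I would write
\begin{equation*}
\Bigl\{\frac{\widehat F_{\eta}(\lambda_m)}{\widehat F_{X}(\lambda_m)}\Bigr\}^{1/2}=\Bigl\{\frac{\widehat F_{\eta}(\lambda_m)}{F_{\eta}(\lambda_m)}\,\frac{F_{X}(\lambda_m)}{\widehat F_{X}(\lambda_m)}\Bigr\}^{1/2}\Bigl\{\frac{F_{\eta}(\lambda_m)}{F_{X}(\lambda_m)}\Bigr\}^{1/2}\,,
\end{equation*}
invoke Conjecture~\ref{conj} — in the same extended form already used in \textbf{case a}, now applied to the Hermite-type processes $X^{u^\ast}_t$ and $\eta^{u^\ast}_t$ — so that the first factor tends to $1$ in probability, and compute the deterministic factor as
\begin{equation*}
\Bigl\{\frac{F_{\eta}(\lambda_m)}{F_{X}(\lambda_m)}\Bigr\}^{1/2}=\lambda_m^{\,d_{X^{u^\ast}}-d_{\eta^{u^\ast}}}=\Bigl(\frac{2\pi m}{T}\Bigr)^{d_{X^{u^\ast}}-d_{\eta^{u^\ast}}}\,.
\end{equation*}
It then remains to identify the two orders. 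By the expansion at $u=u^\ast$ the regressor $X^{u^\ast}_t$ is dominated (since $u^\ast\neq0$) by the isolated second-chaos term $u^\ast\phi(u^\ast)(2\ell^\ast+1)\{\widehat C_{\ell^\ast}(t)-C_{\ell^\ast}\}$, whose autocovariance behaves like $C_{\ell^\ast}(\tau)^2\approx(1+\tau)^{2(2d_\ast-1)}$, hence $d_{X^{u^\ast}}=2d_\ast-\tfrac12$.

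The only genuinely new point — and the step I expect to be the main obstacle — is the identification of $d_{\eta^{u^\ast}}$. The remainder $R_A(u^\ast,t)'$ splits into the first chaos $\tfrac{1}{\sqrt{4\pi}}\phi(u^\ast)a_{00}(t)\sim I(d_0)$, the second-chaos terms with $\ell\neq\ell^\ast$ which, $\ell^\ast$ being the unique multipole with $d_{\ell^\ast}=d_\ast$ in \textbf{case b}, have memory order $2d_{\ast\ast}-\tfrac12$, and the chaoses of order $q\ge3$, the slowest decaying being $\sim I(3d_\ast-1)$; $R_{\mathcal L}(u^\ast,t)'$ has the same structure, and one must check that in $\eta^{u^\ast}_t=R_{\mathcal L}(u^\ast,t)'-\alpha R_A(u^\ast,t)'$ no leading piece is accidentally annihilated. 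For the first chaos this is immediate: the coefficients of $a_{00}(t)$ in $R_{\mathcal L}$ and $R_A$ are $\sigma_1\sqrt2\,\pi u^\ast\phi(u^\ast)$ and $\tfrac{1}{\sqrt{4\pi}}\phi(u^\ast)$, whose ratio is proportional to $u^\ast$ and hence differs from $\alpha=\tfrac{\sigma_1}{2}\sqrt{\pi/2}\,(u^\ast)^{-1}$, so the first chaos of $\eta^{u^\ast}_t$ does not vanish; generically the same holds for the $\ell\neq\ell^\ast$ second-chaos and the $q=3$ contributions. Therefore $d_{\eta^{u^\ast}}=d_0\vee(3d_\ast-1)\vee(2d_{\ast\ast}-\tfrac12)$, strictly smaller than $2d_\ast-\tfrac12$ in \textbf{case b}, and collecting everything,
\begin{equation*}
|\widehat\sigma_1-\sigma_1|=2u^\ast\sqrt{2/\pi}\,|\widehat\alpha-\alpha|=O_p\!\left(\Bigl(\tfrac{m}{T}\Bigr)^{(2d_\ast-1/2)-[d_0\vee(3d_\ast-1)\vee(2d_{\ast\ast}-1/2)]}\right)\,,
\end{equation*}
as claimed; should some Hermite coefficient $H_q(u^\ast)$ vanish, $d_{\eta^{u^\ast}}$ is only smaller and the bound holds a fortiori. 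The part that truly requires care beyond \textbf{case a} is thus the rigorous justification that the extension of Conjecture~\ref{conj} applies to the sums of chaoses $X^{u^\ast}_t,\eta^{u^\ast}_t$ and that the subleading chaotic components of $\eta^{u^\ast}_t$ do not interfere with the leading rate.
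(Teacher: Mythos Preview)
Your proposal is correct and follows essentially the same route as the paper: reduce to the argument of \textbf{case a}, identify $d_{X^{u^\ast}}=2d_\ast-\tfrac12$ from the dominant $\ell^\ast$ second-chaos term, and obtain $d_{\eta^{u^\ast}}=d_0\vee(3d_\ast-1)\vee(2d_{\ast\ast}-\tfrac12)$ by listing the surviving chaotic components of the residual. Your explicit check that the first-chaos contribution to $\eta^{u^\ast}_t$ is not annihilated is in fact more detailed than the paper, which simply asserts the residual's integration order without verifying non-cancellation.
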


\begin{proof}
Here, by construction, the residuals of the regression $\varepsilon_t^u$
contains the first and third chaoses of $A(u^\ast;t)$ and $\mathcal{L}%
(u^\ast;t)$, and the following term
\begin{equation*}
u^\ast \phi(u^\ast) \sum_{\ell\ne\ell^\ast}(2\ell+1)\left(\alpha\left[%
(u^\ast)^2-1+\frac{\lambda_\ell/2}{\sigma_1^2}\right]-1\right)\left\{%
\widehat{C}_\ell(t)-C_\ell\right\}
\end{equation*}
which are the multipoles of the second chaoses of the area and length
functionals that have shorter memories, and hence its order of integration
is $d_{0}\vee (3d_{\ast}-1) \vee (2d_{\ast\ast}-1/2)$ which means
\begin{equation*}
\varepsilon_{t} \sim I\left(d_{0}\vee (3d_{\ast}-1) \vee
(2d_{\ast\ast}-1/2)\right)
\end{equation*}
As a consequence
\begin{align*}
|\widehat{\alpha}-\alpha| &=O_{p}\left(\left(\frac
mT\right)^{(2d_{\ast}-1/2)-[d_{0}\vee (3d_{\ast}-1) \vee
(2d_{\ast\ast}-1/2)]}\right)\,,
\end{align*}
which concludes the proof.
\end{proof}

\begin{remark}[Estimation of the leading multipoles]
One example of application of the previous result is for random field which
are known to be monochromatic, for an unknown eigenvalue; in these
circumstances our proposal for the derivation of $\sigma _{1}$ is equivalent
to the estimation of the dominating eigenvalue.\newline
\end{remark}

\section{Numerical results}

In this section, we validate our theoretical results with simulations.
Specifically, we consider a finite-rank random field
\begin{equation*}
Z(x,t) = \sum_{\ell=0}^{L} a_{\ell m}(t) Y_{\ell m}(x), \qquad x\in \mathbb{S%
}^2,\ t \in \mathbb{Z},
\end{equation*}
for some finite $L \ge 0 $. Each $\{a_{\ell m}(t),\ t \in \mathbb{Z}\} $ is
modelled as the increments of a fractional Brownian motion (fBm) with Hurst
parameter $H_\ell = d_\ell + 1/2, \ 0<d_\ell<1/2 $. More precisely, for each
fixed multipole $\ell $, we consider $(2\ell+1) $ independent zero-mean
Gaussian processes with covariance function
\begin{equation*}
C_\ell (\tau) = \frac{\gamma(0)}{2} \left( |\tau +1|^{2H_\ell} + |\tau -
1|^{2H_\ell} - 2 |\tau|^{2H_\ell} \right), \qquad \tau \in \mathbb{Z},
\end{equation*}
where $\gamma(0) = 4\pi \big/ \sum_{\ell=0}^L (2\ell +1) $ —so that
the field has unit variance. It is immediate to see that this is a
stationary covariance function with memory parameter $d_\ell $, and that the
assumptions used throughout the paper are satisfied.

We choose $L = 10 $; moreover, we consider the case $d_0 > 2 d_* - 1/2 $, by
choosing $d_\ell = 0.3 $ for every $\ell \in \{0,\dots,L\} $. In other
words, we are assuming to have the same degree of memory at every multipole,
and we expect $\Gamma(1, |\tau|) \approx |\tau|^{-0.4} $ as $\tau \to
+\infty $. The same behaviour is also expected for the autocovariance of the
geometric functionals computed at a fixed level $u $.

On the other hand, if we consider, for instance, the cointegrated residuals
\begin{equation*}
A(u_1,u_2;t) = A(u_1,t) - \frac{\phi(u_1)}{\phi(u_2)} A(u_2,t), \qquad t \in
\mathbb{Z},
\end{equation*}
for given levels $u_1, u_2 $, we expect the corresponding autocovariance
function to decay as $|\tau|^{-0.8} $: loosely speaking, these residuals
have "less memory" —this is indeed the definition of cointegration.
Figure \ref{fig:excursion} shows the excursions sets at given level $u=0.1$
for a single realization of the field at times $t=1,2,3,10.$ Figure~\ref%
{fig:path} (left) shows the evolution over time of the centered area at
levels $u_1 = -0.1 $ and $u_2 = 0.1 $, along with the corresponding
cointegrated residuals; the right panel shows the same for $u_1 = -0.5 $ and
$u_2 = 0.5 $.

\begin{figure}[ht!]
\center
\includegraphics[scale=.25]{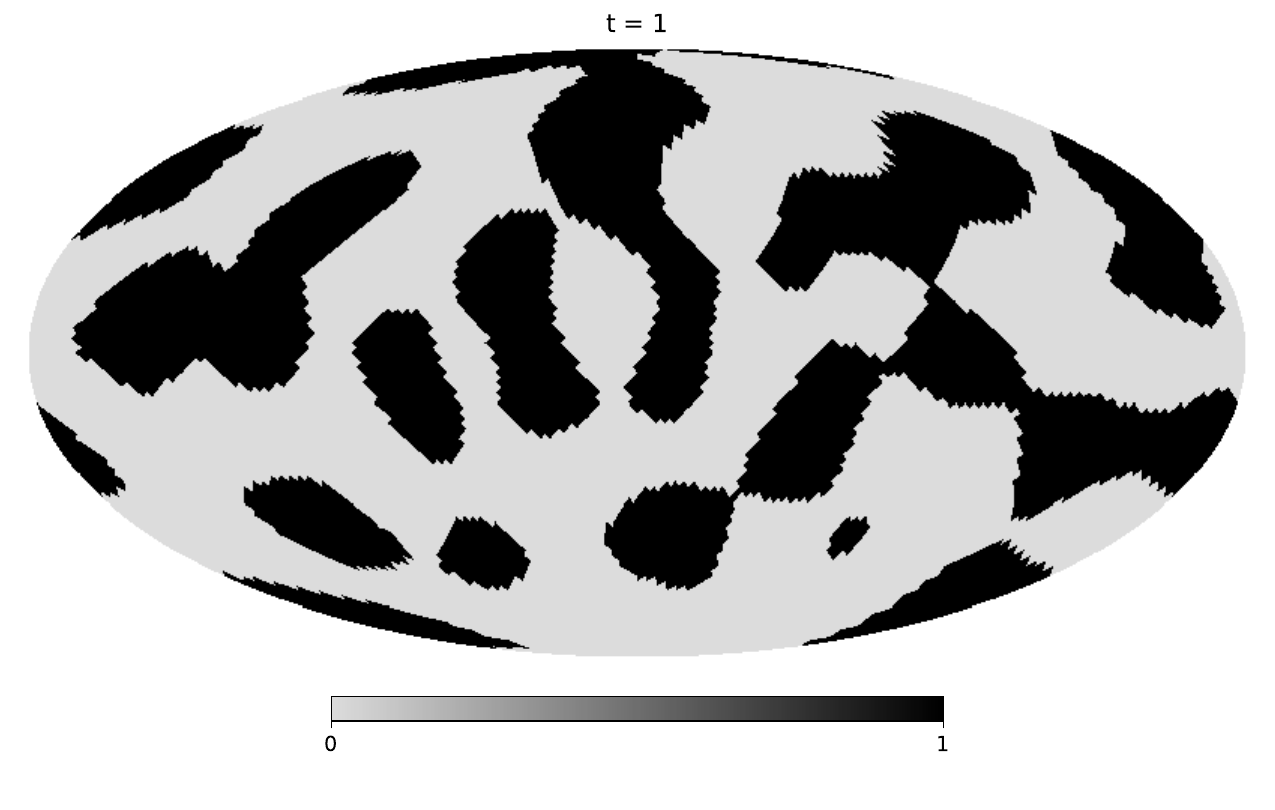} %
\includegraphics[scale=.25]{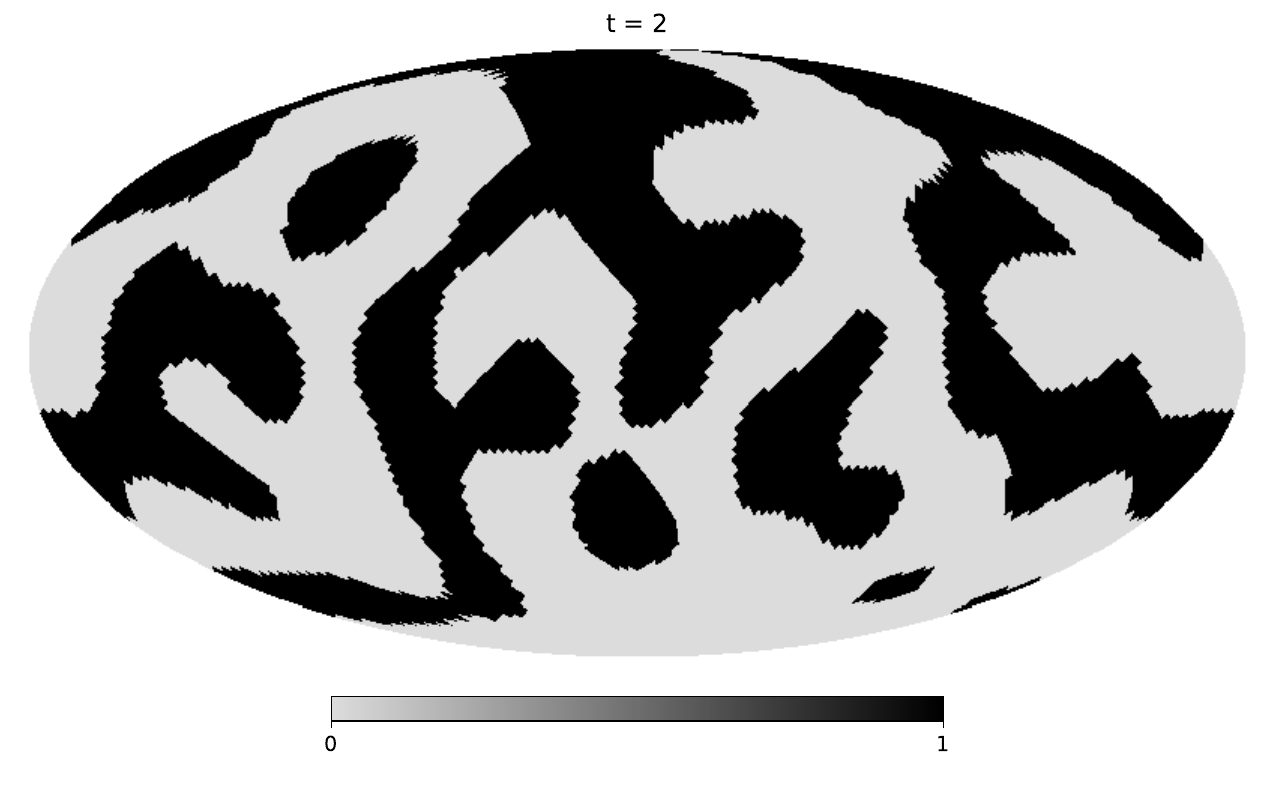} %
\includegraphics[scale=.25]{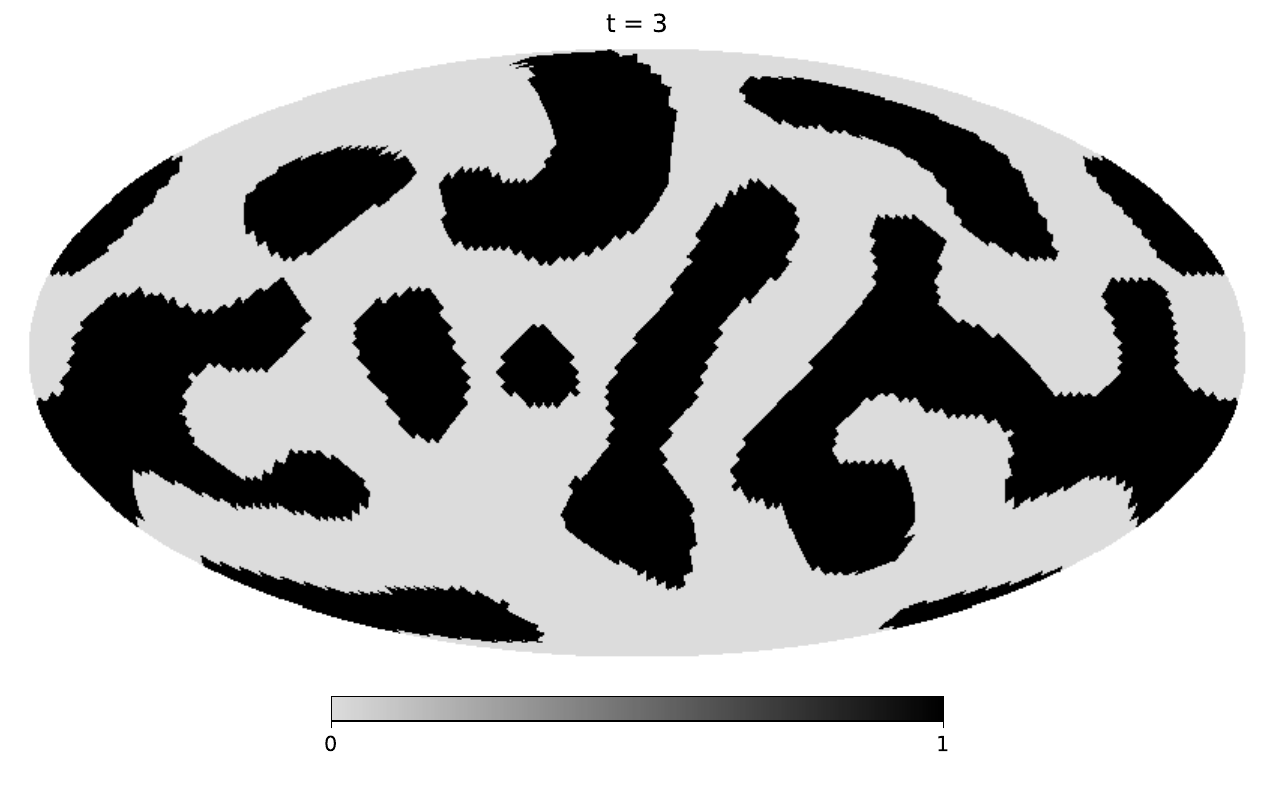} %
\includegraphics[scale=.25]{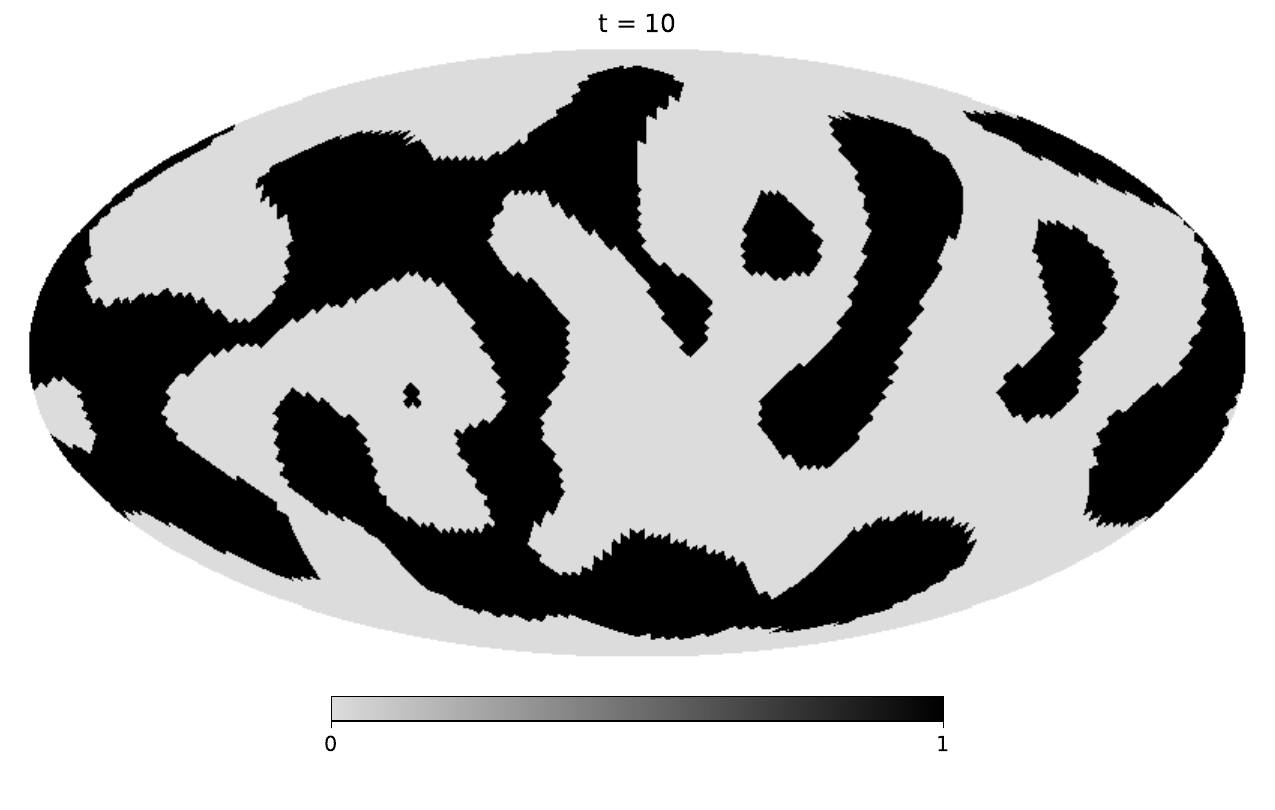}
\caption{Excursions sets at given level $u=0.1$ (black above $u$, grey below
$u$) for a single realization of the field at times $t=1,2,3,10.$}
\label{fig:excursion}
\end{figure}

\begin{figure}[ht!]
\center
\includegraphics[scale=.35]{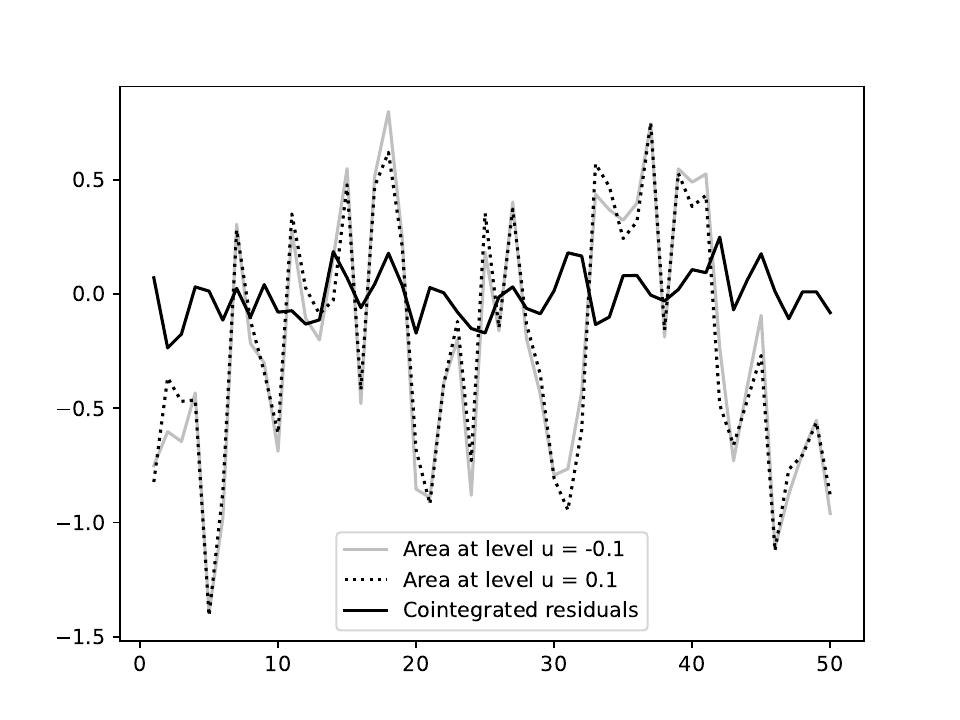} %
\includegraphics[scale=.35]{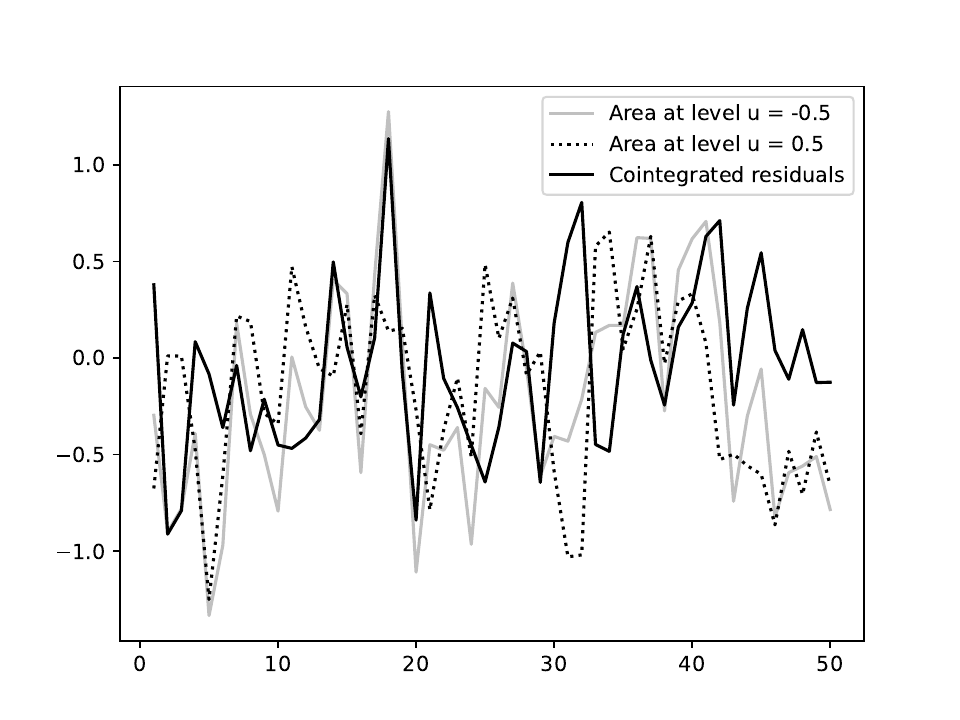}
\caption{Evolution over time of the centered area and cointegrated
residuals. Left: levels $u_1 = -0.1 $ and $u_2 = 0.1 $. Right: levels $u_1 =
-0.5 $ and $u_2 = 0.5 $.}
\label{fig:path}
\end{figure}

We validate these theoretical predictions by examining the empirical
autocovariance function
\begin{equation*}
\hat{\rho}(\tau) = \frac{1}{T - \tau} \sum_{t=1}^{T - \tau} X(t) X(t +
\tau), \qquad \tau = 1, \dots, T-1,
\end{equation*}
where
\begin{equation*}
X(t) =
\begin{cases}
Z(N, t), \\
A(u_1,t) - \mathbb{E}[A(u_1;t)], \\
A(u_2,t) - \mathbb{E}[A(u_2;t)], \\
A(u_1,u_2;t) - \mathbb{E}[A(u_1,u_2;t)].%
\end{cases}
\end{equation*}

Note that $\mathbb{E}[A(u_j;t)] = 4\pi \int_{u_j}^\infty \phi(u)\,du $, for $%
j = 1,2 $, and $N $ denotes the \emph{North Pole} (although any other $x \in
\mathbb{S}^2 $ can be chosen). There is a wide array of available estimators
for long memory parameters; for simplicity, we choose a log-regression on
the autocovariance function, i.e., we estimate the decay rate by solving the
following ordinary least squares problem
\begin{equation*}
\min_{\alpha, \beta \in \mathbb{R}}\sum_{\tau=1}^{q_T} \left ( \log |\hat{%
\rho}(\tau)| - \alpha - \beta \log \tau \right)^2,
\end{equation*}
with $q_T = \min \{\lfloor \log_{10} T\rfloor, T - 1\}$.

In practice, for each $\ell $, we simulate $(2\ell+1) $ i.i.d.\ fractional
Brownian motions with Hurst parameter $H_\ell = d_\ell + 1 = 0.8 $ on the
discrete grid $\{0, 1, \dots, T\} $, taking into account the normalisation
given by $\gamma(0) $. Then, we compute the increments of the differences
between the fBm realisations at time $t $ and those at time $t-1 $, for $t =
1, \dots, T $. We set $T = 1000 $.

From these simulated coefficients, we use the Python packages \texttt{healpy}
and \texttt{pynkowski} to obtain the field and the associated geometric
functionals for given threshold levels $u_1 = -0.1$ and $u_2=0.1$; finally,
we compute the realized empirical autocovariance function for each of these.
We repeat this routine $B = 1000 $ times and average the results to obtain
more stable estimates: as a final step, we perform an ordinary least squares
log-regression, as described above.

The results are shown in Table \ref{tab:ols_B}; we can observe that the
estimates obtained from the simulations match the theoretical decay rates
very well -- respectively, $|\tau|^{-0.4}$ and $|\tau|^{-0.8}$. Figure \ref%
{fig:ols_B} shows the observed autocovariances $\hat{\rho}(\tau)$, for lags $%
\tau =1,\dots, q_T,$ divided by $e^{\text{Intercept}}$ (see Table \ref%
{tab:ols_B} for the values of Intercept). This allows to clearly see the
decay of the autocovatiance functions up to scale effects, by showing the
difference between the considered functionals. The overall procedure is
repeated for $u_1=-0.5$ and $u_2=0.5$ -- see Table \ref{tab:ols_C} and
Figure \ref{fig:ols_C}.

\begin{table}[ht!]
\begin{center}
\begin{tabular}{lcccc}
\hline
& \textbf{Field} & \textbf{Area ($u_1 = -0.1$)} & \textbf{Area ($u_2 = 0.1$)}
& \textbf{Coint. resid.} \\ \hline
Intercept & -0.7182 & -2.2619 & -2.2627 & -6.8589 \\
$\log \tau$ & -0.4212 & -0.4122 & -0.4125 & -0.8120 \\ \hline
\end{tabular}%
\end{center}
\caption{Ordinary least squares solutions for $X(t)$ being respectively --
from left to right -- $Z(N, t)$ (field), $A(u_1,t) - \mathbb{E}[A(u_1;t)]$
(area at level $u_1=-0.1$), $A(u_2,t) - \mathbb{E}[A(u_2;t)]$ (area at level
$u_2=0.1$), and $A(u_1,u_2;t) - \mathbb{E}[A(u_1,u_2;t)]$ (cointegrated
residuals).}
\label{tab:ols_B}
\end{table}

\begin{table}[ht!]
\begin{center}
\begin{tabular}{lcccc}
\hline
& \textbf{Field} & \textbf{Area ($u_1 = -0.5$)} & \textbf{Area ($u_2 = 0.5$)}
& \textbf{Coint. resid.} \\ \hline
Intercept & -0.6921 & -2.4569 & -2.4690 & -3.8746 \\
$\log \tau$ & -0.4193 & -0.4331 & -0.4231 & -0.8220 \\ \hline
\end{tabular}%
\end{center}
\caption{Ordinary least squares solutions for $X(t)$ being respectively --
from left to right -- $Z(N, t)$ (field), $A(u_1,t) - \mathbb{E}[A(u_1;t)]$
(area at level $u_1=-0.5$), $A(u_2,t) - \mathbb{E}[A(u_2;t)]$ (area at level
$u_2=0.5$), and $A(u_1,u_2;t) - \mathbb{E}[A(u_1,u_2;t)]$ (cointegrated
residuals).}
\label{tab:ols_C}
\end{table}

\begin{figure}[ht!]
\center
\includegraphics[scale=.5]{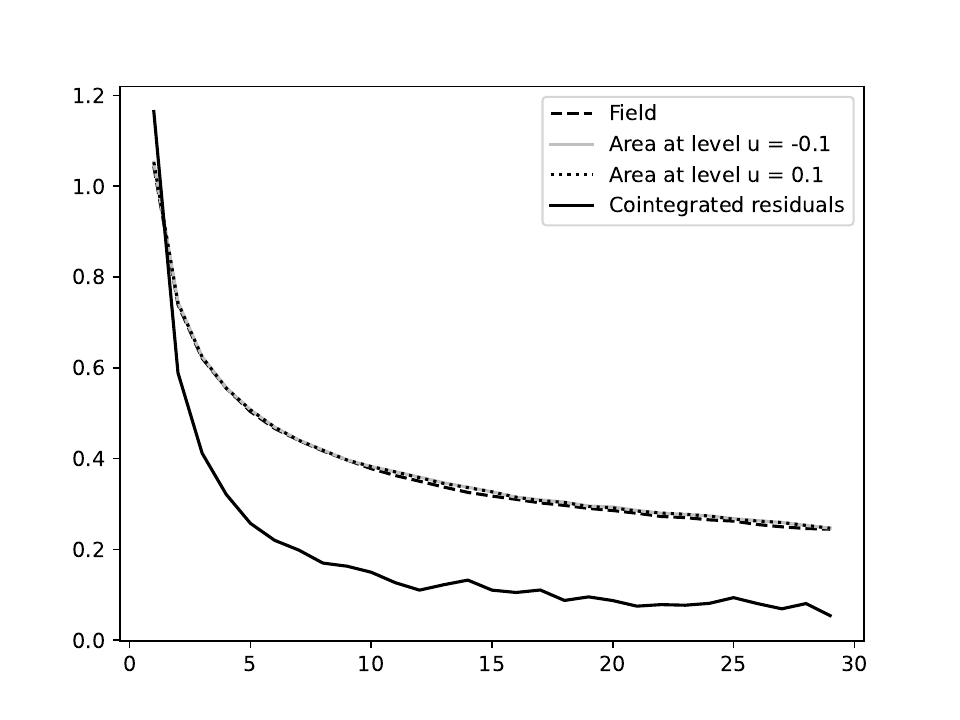}
\caption{Observed $\hat{\protect\rho}(\protect\tau)/ e^{\text{Intercept}}$,
for $\protect\tau =1,\dots, q_T$, for $X(t)$ being respectively $Z(N, t)$
(field), $A(u_1,t) - \mathbb{E}[A(u_1;t)]$ (area at level $u_1=-0.1$), $%
A(u_2,t) - \mathbb{E}[A(u_2;t)]$ (area at level $u_2=0.1$), and $%
A(u_1,u_2;t) - \mathbb{E}[A(u_1,u_2;t)]$ (cointegrated residuals). See Table
\protect\ref{tab:ols_B} for the values of Intercept.}
\label{fig:ols_B}
\end{figure}

\begin{figure}[ht!]
\center
\includegraphics[scale=.5]{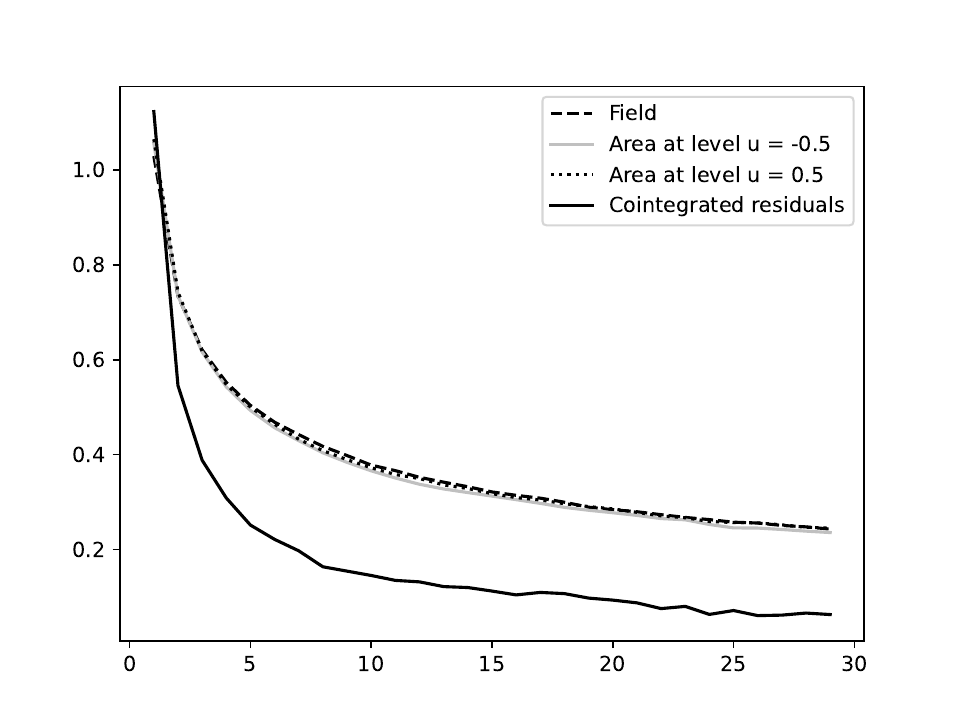}
\caption{Observed $\hat{\protect\rho}(\protect\tau)/ e^{\text{Intercept}}$,
for $\protect\tau =1,\dots, q_T$, for $X(t)$ being respectively $Z(N, t)$
(field), $A(u_1,t) - \mathbb{E}[A(u_1;t)]$ (area at level $u_1=-0.5$), $%
A(u_2,t) - \mathbb{E}[A(u_2;t)]$ (area at level $u_2=0.5$), and $%
A(u_1,u_2;t) - \mathbb{E}[A(u_1,u_2;t)]$ (cointegrated residuals). See Table
\protect\ref{tab:ols_C} for the values of Intercept.}
\label{fig:ols_C}
\end{figure}

\bigskip \bigskip \noindent \textbf{Acknowledgements. }The research leading
to this paper has been supported by PRIN project Grafia (CUP:
E53D23005530006) and PRIN Department of Excellence MatMod@Tov (CUP:
E83C23000330006). DM and AV are members of Indam/GNAMPA.

\end{document}